\documentclass[leqno]{aadmbook}

\usepackage{color}

\usepackage{amsthm}
\usepackage{amsfonts}
\usepackage{amsmath}
\pagestyle{myheadings}

\textwidth 28cc

\markboth{{\small\rm \hfill P. Njionou Sadjang
\hfill}\hspace{-\textwidth}%
\underline{${{}_{}}_{}$\hspace{\textwidth}}}
{\underline{${{}_{}}_{}$\hspace{\textwidth}}\hspace{-\textwidth}%
{\small\rm \hfill On two $(p,q)$-analogues of the Laplace transform
\hfill}}

\setcounter{page}{1}
\textheight 42cc

\parskip .5mm

\parindent 2cc

\newcommand{\dpq}{d_{p,q}}
\newcommand{\Dpq}{D_{p,q}}
\newcommand{\Epq}{\textsl{E}_{p,q}}
\newcommand{\epq}{\textsl{e}_{p,q}}

\newcommand{\pqbinomial}[4]{\mbox{$
\biggl[ \!\!
\begin{array}{c}
#1\\
 #2
\end{array}\!\!\biggr]_{
\!{#3,#4}} $} }

\newcommand{\N}{\ensuremath{\mathbb{N}}}

\newcommand{\Z}{\ensuremath{\mathbb{Z}}}

\newcommand{\C}{\ensuremath{\mathbb{C}}}

\newcommand{\partialpq}[2]{\dfrac{\partial_{p,q}^{#2}}{\partial_{p,q}{#1}^{#2}}}
\newcommand{\partialpqone}[1]{\dfrac{\partial_{p,q}}{\partial_{p,q}{#1}}}

\newtheorem{theorem}{Theorem}[section]

\newtheorem{proposition}{Proposition}[section]
\theoremstyle{definition}
\newtheorem{definition}[theorem]{Definition}

\newtheorem{corollary}[theorem]{Corollary}
\theoremstyle{remark}
\newtheorem{remark}[theorem]{Remark}

\numberwithin{equation}{section}

\begin{document}

% Your \newcommands below (if there are any):

\newcommand{\qhypergeom}[5]{\mbox{$
_#1 \phi_#2\left. \!\! \left( \!\!\!\!
\begin{array}{c}
\multicolumn{1}{c}{\begin{array}{c} #3
\end{array}}\\[1mm]
\multicolumn{1}{c}{\begin{array}{c} #4
            \end{array}}\end{array}
\!\!\!\! \right| \displaystyle{#5}\right) $} }

\newcommand{\pqhypergeom}[5]{\mbox{$
_#1 \Phi_#2\left. \!\! \left( \!\!\!\!\!\!\!
\begin{array}{c}
\multicolumn{1}{c}{\begin{array}{c} #3
\end{array}}\\[1mm]
\multicolumn{1}{c}{\begin{array}{c} #4
            \end{array}}\end{array}
\!\! \right| \displaystyle{#5}\!\right) $} }

\newcommand{\Cos}{\textrm{\ensuremath{{Cos}}}}
\newcommand{\Sin}{\textrm{\ensuremath{{Sin}}}}

\newcommand{\pqcos}{\cos_{p,q}}
\newcommand{\pqsin}{\sin_{p,q}}
\newcommand{\pqCos}{\Cos_{p,q}}
\newcommand{\pqSin}{\Sin_{p,q}}
\newcommand{\Lpq}{L_{p,q}}
\newcommand{\lpq}{\mathcal{L}_{p,q}}

\oddsidemargin 16.5mm
\evensidemargin 16.5mm

\thispagestyle{plain}

%\begin{center}
%{\large \sc  Applicable Analysis and Discrete Mathematics}
%
%{\small available online at  http:/$\!$/pefmath.etf.rs }
%\end{center}
%
%\noindent{\small{\sc  Appl. Anal. Discrete Math.\ }{\bf x} (xxxx),
%xxx--xxx.} \hfill{\scriptsize doi:10.2298/AADMxxxxxxxx}

\vspace{5cc}
\begin{center}

{\large\bf ON TWO $(p,q)$-ANALOGUES OF THE LAPLACE TRANSFORM
\rule{0mm}{6mm}\renewcommand{\thefootnote}{}%Enter at least one, but not more than 3 MSCs.
% First entered MSC will be a primary one, others (at most 2) will be secondary.
\footnotetext{\scriptsize 2010 Mathematics Subject Classification. FILL SUBJECT MSCs HERE.

\rule{2.4mm}{0mm}Keywords and Phrases. $(p,q)$-Exponential, $(p,q)$-Laplace, $(p,q)$-integral, $(p,q)$-derivative.
}}

\vspace{1cc}
{\large\it P. Njionou Sadjang}

\vspace{1cc}
\parbox{24cc}{{\small

Two $(p,q)$-Laplace transforms are introduced and their relative properties are stated and proved. Applications are made to solve some $(p,q)$-linear difference equations. 
%Integral and derivative of fractional order, $D^\alpha f$, have been considered extensively in the literature. . Many properties based on this definition are established. Application is done to define fractional Continuous Hahn and Meixner-Pollaczek functions.

%
%Avoid citations in the  abstract, but if they are necessary, do not use labels from the
%main body of the article. For example, instead of  writing "formula (2)" , you
%should reproduce formula (2), or describe it in words. Instead of writing
%"the result  obtained in [1]" , you should write "the result obtained in
%{\sc J. A. Baker:}  {\it Isometries in normed spaces,}
%Amer. Math. Monthly {\bf 78} (1971), 655--658".

}}
\end{center}

\vspace{1cc}

\vspace{1cc}

\section{Introduction}

\noindent The classical Laplace transform of a function $f(t)$ is given by 
\begin{equation}\label{laplace1}
\mathcal{L}\{f(t)\}(s)=\int_{0}^{\infty}e^{-st}f(t)dt,\quad \quad s=a+ib\in\C,\end{equation}
and plays a fundamental role in pure and applied analysis, specially in solving differential equations. If a function of discrete variable $f(t)$, $t\in \Z$ is considered, then the integral transform \eqref{laplace1} reads 
\begin{equation}\label{ztrans}
  F(z)=\sum_{j=0}^{\infty}f(j)z^{-j},\quad z=e^{-p}.
\end{equation}
Equation \eqref{ztrans} is referred to as $Z$-transform and plays similiar role in difference analysis as Laplace transform in continuous analysis, specially in solving difference equations. 

In order to deal with $q$-difference equations, $q$-versions of the classical Laplace transform have been consecutively introduced in the literature. Studies of $q$-versions of Laplace transform go back to Hahn \cite{hahn}. Abdi \cite{abdi1960, abdi1962, abdi1964} published also many results in this domain. 

The $q$-deformed algebras \cite{Ogievetsky, quesne} and their generalizations ($(p,q)$-deformed algebras) \cite{burban, FLV, Jagannathan1998}  attract much attention these last years. The main reason is that these
topics stand for a meeting point of today's fast developing areas in mathematics and physics like the theory of quantum orthogonal polynomials and special functions, quantum groups, conformal field theories and statistics. From these works, many generalizations of special functions arise. There is a considerable list of references.

In this paper, we introduce two $(p,q)$-versions of the Laplace transform and provide some of their main properties. 	Next, some applications are done to solve some $(p,q)$-difference equations.

The paper is organised as follows:
\begin{enumerate}
   \item In Section 2, we recall basic notations, definitions and prove some impo4r4tant properties that will help in the next sections. The $(p,q)$-number, the $(p,q)$-factorial, the $(p,q)$-power, the $(p,q)$-binomial, the $(p,q)$-derivative, the $(p,q)$-integral, the $(p,q)$-exponentials, the $(p,q)$-trigonometric functions are successively introduced and some of their important properties are provided. 
   \item In Section 3, we introduce the $(p,q)$-Laplace transforms of first and of second kind. Their main properties are studied and the transforms of many fundemental functions are computed. 
   \item In Section 4, some applications of the Laplace transform of first kind are made. The same method can be used with the Laplace transform of second kind. The $(p,q)$-oscillator is introduced and solved using the Laplace transform of first kind. 
   \item In Section 5, we give a conclusion and indicate further possible directions that could be investigated to complete the following work. 
   \end{enumerate}

\section{Basic definitions and miscellaneous results  }

\subsection{$(p,q)$-number, $(p,q)$-factorial, $(p,q)$-binomial, $(p,q)$-power}
Let us introduce the following notation (see  \cite{JS2006},\cite{  JR2010},\cite{ njionou-2014-3})
\begin{equation*}\label{pqnumber}
[n]_{p,q}=\frac{p^n-q^n}{p-q},
\end{equation*}
for any positive integer. 

\noindent The twin-basic number is a natural generalization of the $q$-number, that is
\begin{equation*}
\lim\limits_{p\to 1}[n]_{p,q}=[n]_q.
\end{equation*}

\noindent The $(p,q)$-factorial is defined by  (\cite{JR2010, njionou-2014-3})
\begin{equation*}
[n]_{p,q}!=\prod_{k=1}^{n}[k]_{p,q}!,\quad n\geq 1,\quad [0]_{p,q}!=1.
\end{equation*}
\noindent Let us introduce also the so-called $(p,q)$-binomial coefficients
\begin{equation*}\label{pqbin}
\pqbinomial{n}{k}{p}{q}=\dfrac{[n]_{p,q}!}{[k]_{p,q}![n-k]_{p,q}!}, \quad 0\leq k\leq n.
\end{equation*}
 Note that as $p\to 1$, the $(p,q)$-binomial coefficients reduce to the $q$-binomial coefficients.\\
 It is clear by definition that 
 \begin{equation*}\label{binomial1}
 \pqbinomial{n}{k}{p}{q}=\pqbinomial{n}{n-k}{p}{q}.
 \end{equation*}
 
\noindent  Let us introduce also the so-called the $(p,q)$-powers \cite{njionou-2014-3}
\begin{eqnarray*}
 (x\ominus a)_{p,q}^n&=&(x-a)(px-aq)\cdots (xp^{n-1}-aq^{n-1}),\\
 (x\oplus a)_{p,q}^n&=&(x+a)(px+aq)\cdots (xp^{n-1}+aq^{n-1}).
\end{eqnarray*}

\noindent These definitions are extended to 
\begin{eqnarray*}\label{infinitpq}
(a\ominus b)_{p,q}^{\infty}=\prod_{k=0}^{\infty}(ap^k-q^kb)\\
(a\oplus b)_{p,q}^{\infty}=\prod_{k=0}^{\infty}(ap^k+q^kb)
\end{eqnarray*}
where the convergence is required.

\subsection{The $(p,q)$-derivative and the $(p,q)$-integral}
\begin{definition}\cite{njionou-2014-3}
Let $f$ be an arbitrary function and $a$ be a real number, then the $(p,q)$-integral of  $f$ is defined by
\begin{eqnarray}
\int_{0}^{a}f(x)\dpq x&=&(p-q)a\sum\limits_{k=0}^{\infty}\frac{q^{k}}{p^{k+1}}f\left(\frac{q^{k}}{p^{k+1}}a\right)  \quad \textrm{if }\quad \left|\dfrac{p}{q}\right|>1.\label{pqint2}
\end{eqnarray}
\end{definition}

%\begin{definition}\cite{njionou-2014-3}
%Let $f$ be an arbitrary function  $a$ and $b$ be two nonnegative numbers such that $a<b$, then we set 
%\begin{equation}
%\int_{a}^{b}f(x)\dpq x=\int_{0}^{b}f(x)\dpq x-\int_{0}^{a}f(x)\dpq x.
%\end{equation}
%\end{definition}

\begin{definition}\cite{njionou-2014-3}
The improper $(p,q)$-integral of $f(x)$ on $[0;\infty)$ is defined to be 
\begin{eqnarray}
\int_0^{\infty}f(x)\dpq x
&=& (p-q)\sum_{j=-\infty}^{\infty}\dfrac{q^j}{p^{j+1}}f\left(\frac{q^{j}}{p^{j+1}}\right),\;\;\;  0<\dfrac{q}{p}<1.\label{improper1}
\end{eqnarray} 
\end{definition}

\noindent Let $f$ be a function defined on the set of the complex numbers.

\begin{definition}
The $(p,q)$-derivative of  the function $f$  is defined as  %(see e.g. \cite{Jagannathan2006,desire})   
\begin{equation*}
\Dpq f(x)=\dfrac{f(px)-f(qx)}{(p-q)x},\quad x\neq0,
\end{equation*}
and $(\Dpq f)(0)=f'(0)$,
provided that $f$ is differentiable at $0$.
\end{definition}

\begin{proposition}
The $(p,q)$-derivative fulfils the following product and quotient rules
\begin{eqnarray*}
\Dpq (f(x)g(x))&=& f(px)\Dpq g(x)+g(qx)\Dpq f(x),\label{productrule2}\\
\Dpq (f(x)g(x))&=& g(px)\Dpq f(x)+f(qx)\Dpq g(x).\label{productrule3}
\end{eqnarray*}
\begin{eqnarray*}
\Dpq \left(\frac{f(x)}{g(x)}\right)&=&\dfrac{g(qx)\Dpq f(x)-f(qx)\Dpq g(x)}{g(px)g(qx)},\label{quotient1}\\
\Dpq \left(\frac{f(x)}{g(x)}\right)&=&\dfrac{g(px)\Dpq f(x)-f(px)\Dpq g(x)}{g(px)g(qx)}.\label{quotient2}
\end{eqnarray*}
\end{proposition}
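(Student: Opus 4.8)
The plan is to prove all four identities by direct computation from the definition of $\Dpq$, using nothing more than the add-and-subtract technique familiar from the classical and $q$-deformed Leibniz rules. First I would write out the left-hand side of the first product rule explicitly:
\begin{equation*}
\Dpq(f(x)g(x)) = \frac{f(px)g(px) - f(qx)g(qx)}{(p-q)x}.
\end{equation*}
The idea is to insert the cross term $-f(px)g(qx) + f(px)g(qx)$ in the numerator. This splits it as $f(px)(g(px)-g(qx)) + g(qx)(f(px)-f(qx))$, and dividing by $(p-q)x$ recognizes the two pieces as $f(px)\Dpq g(x)$ and $g(qx)\Dpq f(x)$ respectively, which is exactly the claimed first form.

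For the second (symmetric) product rule I would repeat the same step but insert the other cross term $-f(qx)g(px)+f(qx)g(px)$; this regroups the numerator as $g(px)(f(px)-f(qx)) + f(qx)(g(px)-g(qx))$, giving $g(px)\Dpq f(x) + f(qx)\Dpq g(x)$. In effect the two product rules correspond to the two choices of which factor is evaluated at $px$ and which at $qx$, reflecting the asymmetry between $p$ and $q$ in the $(p,q)$-difference operator.

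For the quotient rules I would set $h = f/g$ and start from
\begin{equation*}
\Dpq\left(\frac{f(x)}{g(x)}\right) = \frac{1}{(p-q)x}\left(\frac{f(px)}{g(px)} - \frac{f(qx)}{g(qx)}\right) = \frac{f(px)g(qx) - f(qx)g(px)}{(p-q)x\, g(px)g(qx)},
\end{equation*}
valid wherever $g(px)$ and $g(qx)$ are nonzero. It then remains to verify that the numerator $f(px)g(qx)-f(qx)g(px)$ equals $(p-q)x$ times each of the claimed numerators. Inserting $\pm f(qx)g(qx)$ gives $g(qx)(f(px)-f(qx)) - f(qx)(g(px)-g(qx)) = (p-q)x\big(g(qx)\Dpq f(x) - f(qx)\Dpq g(x)\big)$, which yields the first quotient rule; inserting $\pm f(px)g(px)$ instead yields the second.

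The computations are entirely routine, so there is no genuine obstacle here; the only thing to watch is the bookkeeping, namely choosing the correct cross term for each of the four target forms and, in the quotient case, keeping the common denominator $g(px)g(qx)$ fixed while only the numerator is rearranged. The nonvanishing of $g$ at the dilated points $px$ and $qx$ is a standing assumption implicit in the statement.
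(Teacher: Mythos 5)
Your proof is correct: all four identities follow exactly as you describe, by inserting the appropriate cross term into the numerator of the difference quotient and, for the quotient rules, putting the two fractions over the common denominator $g(px)g(qx)$ first. The paper states this proposition without any proof at all (treating it as routine), and your add-and-subtract computation is precisely the standard argument it implicitly relies on, so there is nothing to compare beyond noting that your version supplies the omitted details, including the correct standing assumption that $g(px)$ and $g(qx)$ are nonzero.
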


\begin{proposition}
Let $n$ be an integer $n\geq 0$, then the following formula applies
\begin{equation}\label{nth-deriv-invfunct}
\Dpq^n\left[\dfrac{1}{x}\right]=(-1)^n\dfrac{[n]_{p,q}!}{(pq)^{\binom{n+1}{2}}x^{n+1}}. 
\end{equation}
\end{proposition}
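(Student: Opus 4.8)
The plan is to argue by induction on $n$. For $n=0$ the claim reads $\tfrac1x=\tfrac1x$, since $[0]_{p,q}!=1$, $(-1)^0=1$, and $\binom{1}{2}=0$; so the base case is immediate and I can focus on the inductive step.

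The single computational fact I need is the $(p,q)$-derivative of a power of $x$. Directly from the definition of $\Dpq$,
\begin{equation*}
\Dpq[x^m]=\frac{(px)^m-(qx)^m}{(p-q)x}=\frac{p^m-q^m}{p-q}\,x^{m-1}=[m]_{p,q}\,x^{m-1},
\end{equation*}
and this holds for every integer $m$, negative ones included, where $[m]_{p,q}=(p^m-q^m)/(p-q)$. Specializing to $m=-(n+1)$ and clearing denominators gives the identity I will actually use,
\begin{equation*}
[-(n+1)]_{p,q}=\frac{p^{-(n+1)}-q^{-(n+1)}}{p-q}=-\frac{1}{(pq)^{n+1}}\cdot\frac{p^{n+1}-q^{n+1}}{p-q}=-\frac{[n+1]_{p,q}}{(pq)^{n+1}},
\end{equation*}
so that $\Dpq[x^{-(n+1)}]=-(pq)^{-(n+1)}[n+1]_{p,q}\,x^{-(n+2)}$.

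For the inductive step I assume the formula for $n$ and apply $\Dpq$ once more, writing $\Dpq^{n+1}[1/x]=\Dpq\bigl(\Dpq^{n}[1/x]\bigr)$ and feeding the induction hypothesis through the displayed power rule. This produces a factor $-(pq)^{-(n+1)}[n+1]_{p,q}$ multiplying the previous coefficient, and the remaining work is to reconcile this with the target expression. The main (and essentially the only) obstacle is purely bookkeeping of the constants: checking that the sign accumulates to $(-1)^{n+1}$, that the product $[n]_{p,q}!\,[n+1]_{p,q}=[n+1]_{p,q}!$ correctly rebuilds the factorial, and that the exponents of $pq$ add according to $\binom{n+1}{2}+(n+1)=\binom{n+2}{2}$. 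Once these three identities are verified, the right-hand side becomes exactly the claimed expression with $n$ replaced by $n+1$, which closes the induction. No genuine analytic difficulty arises here; the only points that require care are the negative-exponent evaluation of $[m]_{p,q}$ and the telescoping of the $(pq)$-exponent.
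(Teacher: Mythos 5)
Your proof is correct and follows essentially the same route as the paper: induction on $n$, with the inductive step reduced to computing $\Dpq[x^{-(n+1)}]$. The paper does this computation inline from the definition of $\Dpq$, obtaining the same factor $-[n+1]_{p,q}/(pq)^{n+1}$ that you package as the power rule $\Dpq[x^m]=[m]_{p,q}x^{m-1}$ for negative $m$; the bookkeeping of signs, factorials, and $(pq)$-exponents is identical.
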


\begin{proof}
The relation is obvious for $n=0$. Let $n\geq 1$, assume that \eqref{nth-deriv-invfunct} holds true. Then 
\begin{eqnarray*}
\Dpq^{n+1}\left[\dfrac{1}{x}\right]&=&\Dpq\left[(-1)^n\dfrac{[n]_{p,q}!}{(pq)^{\binom{n+1}{2}}x^{n+1}}\right]\\
&=& \dfrac{(-1)^n[n]_{p,q}!}{(pq)^{\binom{n+1}{2}}}\times \dfrac{1}{(p-q)x}\left(\dfrac{1}{(px)^{n+1}}-\dfrac{1}{(qx)^{n+1}}\right)\\
&=& \dfrac{(-1)^n[n]_{p,q}!}{(pq)^{\binom{n+1}{2}}}\times\dfrac{-[n+1]_{p,q}}{(pq)^{n+1}x^{n+2}}=[(-1)^{n+1}\dfrac{[n+1]_{p,q}!}{(pq)^{\binom{n+2}{2}}x^{n+2}}.
\end{eqnarray*}
The proof is then complete.
\end{proof}
\noindent The next proposition generalizes \eqref{nth-deriv-invfunct}.

\begin{proposition}
$a$ is a non zero complex number. Then 
\begin{eqnarray}
\Dpq^n\left[\dfrac{1}{ax+b}\right]&=&\dfrac{(-a)^n[n]_{p,q}!}{\prod\limits_{k=0}^n\left(ap^{n-k}q^kx+b\right)}\label{gen-nth-deriv-invfunct}\\
&=&\dfrac{(-a)^n[n]_{p,q}!}{(ap^nx+b)(ap^{n-1}qx+b)\cdots(apq^{n-1}x+b)(aq^nx+b)}. \nonumber
\end{eqnarray}
\end{proposition}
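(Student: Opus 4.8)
The plan is to prove \eqref{gen-nth-deriv-invfunct} by induction on $n$, mirroring the argument used for \eqref{nth-deriv-invfunct}. The base case $n=0$ is immediate, since the right-hand side reduces to $1/(ax+b)$: indeed $(-a)^0[0]_{p,q}!=1$ and the single factor in the denominator is $ap^0q^0x+b=ax+b$. For the inductive step, I would assume \eqref{gen-nth-deriv-invfunct} holds for some $n\geq 0$ and write
\[
\Dpq^{n+1}\left[\frac{1}{ax+b}\right]=(-a)^n[n]_{p,q}!\,\Dpq\left[\frac{1}{g(x)}\right],\qquad g(x)=\prod_{k=0}^n\left(ap^{n-k}q^kx+b\right),
\]
so that the whole computation reduces to evaluating $\Dpq[1/g(x)]$ directly from the definition $\Dpq h(x)=(h(px)-h(qx))/((p-q)x)$.

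The key observation---and the only place that requires care---is the bookkeeping for $g(px)$ and $g(qx)$. Replacing $x$ by $px$ raises every $p$-exponent by one, giving $g(px)=\prod_{k=0}^n(ap^{n+1-k}q^kx+b)$; replacing $x$ by $qx$ raises every $q$-exponent by one and, after reindexing $k\mapsto k-1$, gives $g(qx)=\prod_{k=1}^{n+1}(ap^{n+1-k}q^kx+b)$. Writing $P=\prod_{k=0}^{n+1}(ap^{n+1-k}q^kx+b)$ for the target denominator, both shifted products agree with $P$ except for one endpoint factor: $g(px)=P/(aq^{n+1}x+b)$ (missing the $k=n+1$ factor) and $g(qx)=P/(ap^{n+1}x+b)$ (missing the $k=0$ factor).

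Substituting these into the difference quotient collapses the numerator to $((aq^{n+1}x+b)-(ap^{n+1}x+b))/P=ax(q^{n+1}-p^{n+1})/P$, and dividing by $(p-q)x$ produces the factor $-a[n+1]_{p,q}$ exactly, since $[n+1]_{p,q}=(p^{n+1}-q^{n+1})/(p-q)$. Hence $\Dpq[1/g(x)]=-a[n+1]_{p,q}/P$, and combining the pieces yields $\Dpq^{n+1}[1/(ax+b)]=(-a)^{n+1}[n]_{p,q}!\,[n+1]_{p,q}/P=(-a)^{n+1}[n+1]_{p,q}!/P$, which is precisely \eqref{gen-nth-deriv-invfunct} with $n$ replaced by $n+1$.

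The main obstacle is purely the index manipulation showing that the two shifted products telescope onto the same $(n+1)$-fold product $P$, each missing a single boundary factor at opposite ends; once that symmetric structure is set up, the algebra is routine and the factorial identity $[n]_{p,q}!\,[n+1]_{p,q}=[n+1]_{p,q}!$ closes the induction.
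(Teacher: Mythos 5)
Your proof is correct and follows the same route the paper intends: the paper simply states that the result ``follows easily by induction,'' and your argument supplies exactly those inductive details. The index bookkeeping for $g(px)$ and $g(qx)$ and the telescoping onto the common product $P$ are all accurate, so this is a valid completion of the paper's sketched proof.
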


\begin{proof}
The proof follows easily by induction. 
\end{proof}
\noindent Note that for $a=1$ and $b=0$, \eqref{gen-nth-deriv-invfunct} reduces to \eqref{nth-deriv-invfunct}.

\begin{proposition}\cite{njionou-2014-3} If $F(x)$ is a $(p,q)$-antiderivative of $f(x)$ and $F(x)$ is continuous at $x=0$, we have 
\begin{equation*}\label{fundemental}
\int_a^bf(x)\dpq x=F(b)-F(a),\quad 0\leq a<b\leq \infty.
\end{equation*}
\end{proposition}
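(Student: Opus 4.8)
The plan is to prove the identity first for the lower endpoint $a=0$, where the $(p,q)$-integral has the explicit series form \eqref{pqint2}, and then to extend it to a general interval by additivity. The whole argument rests on a single observation: once the defining relation $\Dpq F=f$ is substituted into the series \eqref{pqint2}, the sum telescopes, and the hypothesis that $F$ is continuous at $0$ is exactly what is needed to evaluate the resulting limit.

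Concretely, I would set $x_k=\frac{q^k}{p^{k+1}}a$, so that the $k$-th summand of $(p-q)a\sum_{k\ge0}\frac{q^k}{p^{k+1}}f(x_k)$ equals $(p-q)x_kf(x_k)$. Since $F$ is a $(p,q)$-antiderivative of $f$, the definition of $\Dpq$ gives $(p-q)x_kf(x_k)=F(px_k)-F(qx_k)$. A direct computation shows $px_k=(q/p)^ka$ and $qx_k=(q/p)^{k+1}a$, so writing $u_k=(q/p)^ka$ the $k$-th term is exactly $F(u_k)-F(u_{k+1})$. Hence the $N$-th partial sum collapses to
\begin{equation*}
\sum_{k=0}^{N}\bigl(F(u_k)-F(u_{k+1})\bigr)=F(a)-F\bigl((q/p)^{N+1}a\bigr).
\end{equation*}
Because $|p/q|>1$ forces $0<q/p<1$, the nodes $(q/p)^{N+1}a$ tend to $0$, and continuity of $F$ at $0$ yields $F\bigl((q/p)^{N+1}a\bigr)\to F(0)$. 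Letting $N\to\infty$ then gives $\int_0^af(x)\dpq x=F(a)-F(0)$.

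For $0<a<b<\infty$ I would invoke additivity of the $(p,q)$-integral, $\int_a^b=\int_0^b-\int_0^a$, which follows from the definition; subtracting the two evaluations just obtained cancels the common $F(0)$ and leaves $F(b)-F(a)$. The only genuinely delicate point is the endpoint $b=\infty$: there one must pass from \eqref{pqint2} to the improper integral \eqref{improper1} and take $b\to\infty$, so that $\int_0^\infty f\,\dpq x=\lim_{b\to\infty}F(b)-F(0)$ holds only when this limit, together with the associated bi-infinite series, converges. Thus the main obstacle is not the telescoping itself but securing convergence at the two ends of the interval: continuity of $F$ at $0$ disposes of the lower end, while the existence of $\lim_{b\to\infty}F(b)$ is what must be assumed, or separately verified, to handle the upper end.
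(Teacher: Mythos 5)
Your proof is correct and is essentially the canonical argument: substituting $f=\Dpq F$ into the sum \eqref{pqint2} makes it telescope, continuity of $F$ at $0$ evaluates the tail limit, and the case of general endpoints follows from $\int_a^b=\int_0^b-\int_0^a$; the paper itself offers no proof for this proposition (it cites \cite{njionou-2014-3}, where the same telescoping argument is used). Your caveat about $b=\infty$ is also apt, since the bi-infinite sum \eqref{improper1} telescopes to $\lim_{j\to-\infty}F\bigl((q/p)^{j}\bigr)-F(0)$ and thus requires the existence of that limit of $F$ along the lattice.
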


\begin{corollary}\cite{njionou-2014-3}\label{cor1}
If $f'(x)$ exists in a neighbourhood  of $x=0$ and is continuous at $x=0$, where $f'(x)$ denotes the ordinary derivative of $f(x)$, we have 
\begin{equation*}\label{fundemantal2}
\int_{a}^{b}\Dpq f(x)\dpq x=f(b)-f(a).
\end{equation*}
\end{corollary}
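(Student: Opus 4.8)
The plan is to read this off the fundamental theorem of the $(p,q)$-calculus stated in the preceding proposition, by exhibiting $f$ itself as a $(p,q)$-antiderivative of the integrand $\Dpq f$. Indeed, if we set $g(x)=\Dpq f(x)$, then tautologically $\Dpq f = g$, so $f$ is a $(p,q)$-antiderivative of $g$. First I would check that the hypotheses of that proposition are met for the pair $(g,f)$: the antiderivative $f$ must be continuous at $x=0$, and this is immediate from the assumption that $f'$ exists in a neighbourhood of $0$, since differentiability at a point forces continuity there. Applying the proposition with $F=f$ then yields
\[
\int_a^b \Dpq f(x)\,\dpq x = f(b)-f(a),\qquad 0\le a<b\le\infty,
\]
which is exactly the claim.

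The one point that genuinely needs the hypothesis is the well-definedness of the integrand at the accumulation point of the integration nodes. The $(p,q)$-integral \eqref{pqint2} samples the integrand at the points $\tfrac{q^k}{p^{k+1}}a$, which cluster at $0$ as $k\to\infty$; for the sum to make sense one needs $\Dpq f$ to be defined there, and by the definition of the $(p,q)$-derivative this is precisely the value $(\Dpq f)(0)=f'(0)$. The assumption that $f'$ exists and is continuous at $0$ guarantees both that this value exists and that the series converges.

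As a self-contained alternative I would unwind the definition directly and recognise a telescoping sum. Substituting $(\Dpq f)(x)=\dfrac{f(px)-f(qx)}{(p-q)x}$ at $x=\tfrac{q^k}{p^{k+1}}a$ into \eqref{pqint2}, the weight $(p-q)a\,\tfrac{q^k}{p^{k+1}}$ cancels the denominator and the $k$-th term collapses to $f\!\left(\tfrac{q^k}{p^k}a\right)-f\!\left(\tfrac{q^{k+1}}{p^{k+1}}a\right)$; summing over $k$ telescopes to $f(a)-\lim_{k\to\infty}f\!\left(\tfrac{q^{k+1}}{p^{k+1}}a\right)$. Since $0<q/p<1$ the argument tends to $0$, and here continuity of $f$ at $0$ is used to identify the limit as $f(0)$, giving $\int_0^a\Dpq f\,\dpq x=f(a)-f(0)$; subtracting the analogous identity on $[0,a]$ from that on $[0,b]$ finishes the proof. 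The main (mild) obstacle is thus not any computation but the boundary bookkeeping at $0$: ensuring $\Dpq f$ is defined there and that the limit of the telescoped tail is correctly $f(0)$, which is exactly what the continuity hypothesis supplies.
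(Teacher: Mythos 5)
Your argument is correct and coincides with the paper's (implicit) one: the result is stated as a corollary of the preceding fundamental theorem of $(p,q)$-calculus, obtained exactly as you do by taking $F=f$ as a $(p,q)$-antiderivative of $\Dpq f$ and noting that differentiability of $f$ near $0$ gives the required continuity at $0$. Your supplementary telescoping computation is a valid self-contained check but goes beyond what the paper records.
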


\begin{proposition}\cite{njionou-2014-3} Suppose that $f(x)$ and $g(x)$ are two functions whose ordinary derivatives exist in a neighbourhood of $x=0$. $a$ and $b$ are two real numbers such that $a<b$, then
\begin{equation}\label{pq-int-part}
\int_a^b f(px)\left(\Dpq g(x)\right)\dpq x=f(b)g(b)-f(a)g(a)-\int_a^b g(qx)\left(\Dpq f(x)\right)\dpq x.
\end{equation}
\end{proposition}

\subsection{The $(p,q)$-hypergeometric functions}

Here, we give a natural generalization of the $q$-hypergeometric series (\cite{Gasper-Rahman})
{\small\begin{eqnarray*}
 \qhypergeom{r}{s}{ a_{1}, a_{2},\cdots,a_{r}} {b_{1}, b_{2},\cdots ,
b_{s}}{q;z} &=& \sum\limits_{n=0}^{\infty
}\frac{(a_1;q)_n\cdots(a_r;q)_n}{(q,q)_n(b_1;q)_n\cdots(b_s;q)_n}\Big[(-1)^nq^{\binom
n2}\Big]^{1+s-r}z^n.
\end{eqnarray*}}

\begin{definition}
The $(p,q)$-hypergeometric series 
{\small \begin{eqnarray}\label{pqseries}
&&\pqhypergeom{r}{s}{(a_{1p},a_{1q}),\ldots (a_{rp},a_{rq})}{(b_{1p},b_{1q}),\ldots,(b_{sp},b_{sq})}{p,q;z} \\
&&\nonumber\hspace{1.5cm}=\sum_{n=0}^{\infty}\frac{(a_{1p}\ominus a_{1q})_{p,q}^n\cdots (a_{rp}\ominus a_{rq})_{p,q}^n}{(b_{1p}\ominus b_{1q})_{p,q}^n\cdots (b_{sp}\ominus b_{sq})_{p,q}^n(p\ominus q)_{p,q}^n}\Big[(-1)^n\left(\frac{q}{p}\right)^{\binom
n2}\Big]^{1+s-r}z^n,
\end{eqnarray}}
%with $0<\dfrac{q}{p}<1$ 
(\emph{compare \cite{Jagannathan1998,JS2006}}).
\end{definition}

\begin{theorem}[Compare to \cite{JS2006}]
Let $a$, $b$ be two  complex numbers, then we have the following 
\begin{equation}\label{pqbinomtheo}
\pqhypergeom{1}{0}{(a,b)}{-}{p,q;z}=\sum_{n=0}^{\infty}\dfrac{(a\ominus b)_{p,q}^{n}}{(p\ominus q)_{p,q}^n}z^n=\dfrac{(p\ominus bz)_{p,q}^{\infty}}{(p\ominus az)_{p,q}^{\infty}}. 
\end{equation}
\end{theorem}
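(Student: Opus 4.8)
The first equality is purely definitional: it is \eqref{pqseries} specialized to $r=1$, $s=0$. Since then the exponent $1+s-r$ vanishes, the factor $\big[(-1)^n(q/p)^{\binom n2}\big]^{1+s-r}$ collapses to $1$, the single numerator product is $(a\ominus b)_{p,q}^n$, and the denominator is $(b_{1p}\ominus b_{1q})_{p,q}^n\cdots(p\ominus q)_{p,q}^n=(p\ominus q)_{p,q}^n$. Hence only the second equality carries content, and the plan is to prove it by the functional-equation argument familiar from the $q$-binomial theorem. Write $H(z)=\dfrac{(p\ominus bz)_{p,q}^{\infty}}{(p\ominus az)_{p,q}^{\infty}}$ and recall from the definitions that $(p\ominus bz)_{p,q}^{\infty}=\prod_{k\ge 0}(p^{k+1}-bzq^k)$, with the analogous product in the denominator.

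The key step is to establish the $(p,q)$-difference relation $H\!\left(\tfrac{q}{p}z\right)(p-bz)=H(z)(p-az)$. This comes from the reindexing $k\mapsto k+1$ inside each infinite product: replacing $z$ by $(q/p)z$ turns the $k$-th numerator factor $p^{k+1}-bzq^k$ into $\tfrac1p\big(p^{k+2}-bzq^{k+1}\big)$, so that the shifted numerator reproduces the original one with the factor $(p-bz)$ removed, up to an overall constant $\prod_k p^{-1}$; the denominator transforms the same way with $(p-az)$. On forming the quotient $H$ the (formally divergent) constant prefactor cancels identically, leaving the stated relation.

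Next I would expand $H(z)=\sum_{n\ge 0}A_n z^n$, note $A_0=H(0)=1$, and substitute into the functional equation. Matching the coefficient of $z^n$ gives $pA_n\big[(q/p)^n-1\big]=A_{n-1}\big[b(q/p)^{n-1}-a\big]$, which after clearing $p^{n-1}$ becomes $A_n=A_{n-1}\dfrac{ap^{n-1}-bq^{n-1}}{p^n-q^n}$. Since $p^n-q^n=(p-q)[n]_{p,q}$ while $ap^{n-1}-bq^{n-1}$ is exactly the last factor of $(a\ominus b)_{p,q}^{n}=\prod_{k=0}^{n-1}(ap^k-bq^k)$, telescoping from $A_0=1$ yields $A_n=\dfrac{(a\ominus b)_{p,q}^{n}}{(p-q)^n[n]_{p,q}!}$. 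Finally I would record $(p\ominus q)_{p,q}^{n}=\prod_{k=0}^{n-1}(p^{k+1}-q^{k+1})=\prod_{m=1}^{n}(p-q)[m]_{p,q}=(p-q)^n[n]_{p,q}!$, which converts the coefficients into $A_n=(a\ominus b)_{p,q}^{n}/(p\ominus q)_{p,q}^{n}$ and closes the argument.

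The reindexing and the telescoping are routine; the genuine obstacle here is analytic rather than algebraic. The products $(p\ominus bz)_{p,q}^{\infty}$ do not converge naively (the factors tend to $\infty$ rather than to $1$), so the cancellation of the $\prod_k p^{-1}$ prefactor and the passage to a bona fide power series $H(z)=\sum A_n z^n$ need justification. I would handle this in the regime $0<q/p<1$, either by normalizing the products so that their terms tend to $1$ before taking the quotient, or by arguing entirely at the level of formal power series in $z$ and then invoking uniform convergence on compact subsets to legitimize the term-by-term coefficient comparison; the functional equation and recursion themselves are insensitive to this choice.
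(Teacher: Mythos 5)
Your proof is correct, but it takes a genuinely different route from the paper's. The paper proves the second equality by reduction: it observes that $\dfrac{(a\ominus b)_{p,q}^n}{(p\ominus q)_{p,q}^n}=\dfrac{(b/a;q/p)_n}{(q/p;q/p)_n}\left(\dfrac{a}{p}\right)^n$, so the series is an ordinary ${}_1\phi_0$ in the base $q/p$ with argument $az/p$, and then invokes the classical $q$-binomial theorem from Gasper--Rahman to conclude $\dfrac{(bz/p;q/p)_\infty}{(az/p;q/p)_\infty}=\dfrac{(p\ominus bz)_{p,q}^{\infty}}{(p\ominus az)_{p,q}^{\infty}}$ --- a two-line argument, but one that treats the $q$-binomial theorem as a black box. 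You instead transplant the standard \emph{proof} of the $q$-binomial theorem into the $(p,q)$-setting: the functional equation $H\!\left(\tfrac{q}{p}z\right)(p-bz)=H(z)(p-az)$, the coefficient recursion $A_n=A_{n-1}\dfrac{ap^{n-1}-bq^{n-1}}{p^n-q^n}$, and the telescoping via $(p\ominus q)_{p,q}^{n}=(p-q)^n[n]_{p,q}!$ are all checked correctly. What your approach buys is self-containedness and an explicit confrontation with the fact that $(p\ominus bz)_{p,q}^{\infty}$ diverges on its own and only the normalized ratio makes sense --- a point the paper silently absorbs into its rewriting in base $q/p$ (note that your proposed normalization, dividing each factor by $p^{k+1}$, produces exactly the paper's quotient $(bz/p;q/p)_\infty/(az/p;q/p)_\infty$, so the two proofs meet there). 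What the paper's approach buys is brevity and the automatic inheritance of the convergence statement $|az/p|<1$, $|q/p|<1$ from the classical theorem, whereas in your version the justification of the term-by-term coefficient comparison still needs to be written out. Either way the first equality is, as you say, purely definitional since the exponent $1+s-r$ vanishes.
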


\begin{proof} We first note that $\dfrac{(a\ominus b)_{p,q}^n}{(p\ominus q)_{p,q}^n}=\dfrac{\left(\frac{b}{a};\frac{q}{p}\right)_n}{\left(\frac{q}{p};\frac{q}{p}\right)_n}\left( \dfrac{a}{p} \right)^{n}$. It follows from the $q$-binomial theorem (see \cite{Gasper-Rahman}) that  
\begin{eqnarray*}
\sum_{n=0}^{\infty}\dfrac{(a\ominus b)_{p,q}^{n}}{(p\ominus q)_{p,q}^n}z^n&=& \sum_{n=0}^{\infty}\dfrac{\left(\frac{b}{a};\frac{q}{p}\right)_n}{\left(\frac{q}{p};\frac{q}{p}\right)_n}\left( \dfrac{az}{p} \right)^{n}\\
&=& \dfrac{\left(\frac{bz}{p};\frac{q}{p}\right)_\infty}{\left(\frac{az}{p};\frac{q}{p}\right)_\infty}=\dfrac{(p\ominus bz)_{p,q}^{\infty}}{(p\ominus az)_{p,q}^{\infty}}.
\end{eqnarray*}
\end{proof}

\noindent The following corollary also appears in \cite{JS2006}.

\begin{corollary}
$a$, $b$ and $c$ are three complex numbers. Then 
\[\pqhypergeom{1}{0}{(a,b)}{-}{p,q;z}\pqhypergeom{1}{0}{(b,c)}{-}{p,q;z}=\pqhypergeom{1}{0}{(a,c)}{-}{p,q;z}.\]
\end{corollary}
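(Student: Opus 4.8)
The plan is to bypass the defining series entirely and reduce the identity to the closed product form furnished by the $(p,q)$-binomial theorem \eqref{pqbinomtheo}. First I would apply \eqref{pqbinomtheo} to each of the two factors on the left-hand side, obtaining
\[
\pqhypergeom{1}{0}{(a,b)}{-}{p,q;z}=\dfrac{(p\ominus bz)_{p,q}^{\infty}}{(p\ominus az)_{p,q}^{\infty}},\qquad
\pqhypergeom{1}{0}{(b,c)}{-}{p,q;z}=\dfrac{(p\ominus cz)_{p,q}^{\infty}}{(p\ominus bz)_{p,q}^{\infty}}.
\]

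Next I would multiply these two right-hand sides; the common factor $(p\ominus bz)_{p,q}^{\infty}$ occurs once in a numerator and once in a denominator, so it telescopes, leaving
\[
\pqhypergeom{1}{0}{(a,b)}{-}{p,q;z}\,\pqhypergeom{1}{0}{(b,c)}{-}{p,q;z}=\dfrac{(p\ominus cz)_{p,q}^{\infty}}{(p\ominus az)_{p,q}^{\infty}}.
\]
Applying \eqref{pqbinomtheo} one more time, now with the pair $(a,c)$, identifies the right-hand side of the last display as $\pqhypergeom{1}{0}{(a,c)}{-}{p,q;z}$, which is exactly the asserted identity. So the whole argument is: three invocations of the theorem, plus one telescoping cancellation.

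The only point demanding care, and what I expect to be the main obstacle, is justifying the cancellation analytically rather than just formally. The product form in \eqref{pqbinomtheo} is valid only on the region where the relevant infinite products converge, so I would first fix a common domain of $z$ (and of the parameters) on which all three instances of the theorem hold simultaneously and on which $(p\ominus bz)_{p,q}^{\infty}\neq 0$, so that dividing by it is legitimate; the telescoping is then rigorous on that domain. Once the two analytic functions are shown to agree there, the equality of the corresponding power series in $z$ extends by the identity principle to the full common disc of convergence, since both sides are power series in $z$ with matching coefficients. Equivalently, one could establish the statement directly at the level of coefficients through a Cauchy-product computation, but the telescoping-product route above is decidedly the shortest.
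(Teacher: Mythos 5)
Your proposal is correct and is exactly the intended argument: the paper states this as a corollary of the $(p,q)$-binomial theorem \eqref{pqbinomtheo} and gives no separate proof, the telescoping of the product representations $\dfrac{(p\ominus bz)_{p,q}^{\infty}}{(p\ominus az)_{p,q}^{\infty}}\cdot\dfrac{(p\ominus cz)_{p,q}^{\infty}}{(p\ominus bz)_{p,q}^{\infty}}=\dfrac{(p\ominus cz)_{p,q}^{\infty}}{(p\ominus az)_{p,q}^{\infty}}$ being the whole content. Your added care about the common domain of convergence and the nonvanishing of $(p\ominus bz)_{p,q}^{\infty}$ is more than the paper supplies and is a welcome refinement.
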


\subsection{$(p,q)$-exponential and $(p,q)$-trigonometric functions}

\noindent As in the $q$-case, there are many definitions of the $(p,q)$-exponential function. The following two $(p,q)$-analogues of the exponential function (see \cite{JS2006}) will be frequently used throughout this paper:
\begin{eqnarray}
 e_{p,q}(z)&=&\pqhypergeom{1}{0}{(1,0)}{-}{p,q;(p-q)z}=\sum_{n=0}^{\infty}\dfrac{p^{\binom{n}{2}}}{[n]_{p,q}!}z^n,\label{pqexp}\\
 E_{p,q}(z)&=&\pqhypergeom{1}{0}{(0,1)}{-}{p,q;(q-p)z}=\sum_{n=0}^{\infty}\dfrac{q^{\binom{n}{2}}}{[n]_{p,q}!}z^n.\label{bigpqexp}
\end{eqnarray}
From the $(p,q)$-binomial theorem \eqref{pqbinomtheo} and the definitions \eqref{pqexp} and \eqref{bigpqexp} of the $(p,q)$-exponential functions, it is easy to see that 

\begin{equation}\label{invpqexpo}
e_{p,q}(x)E_{p,q}(-x)=1. 
\end{equation}

\noindent The next two propositions give the $n$-th derivative of the $(p,q)$-exponential functions.  These formulas are very important for the computations the $(p,q)$-Laplace transforms of some functions in the next sections. 
\begin{proposition} Let $\lambda$ be a complex number, then
the following relations hold
\begin{eqnarray*}
 \Dpq e_{p,q}(\lambda x)=\lambda e_{p,q}(\lambda px),\\
 \Dpq E_{p,q}(\lambda x)=\lambda E_{p,q}(\lambda qx).
\end{eqnarray*}
\end{proposition}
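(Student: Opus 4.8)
The plan is to work directly from the power-series definitions \eqref{pqexp} and \eqref{bigpqexp} and to differentiate term by term. First I would record the elementary monomial rule, which follows at once from the definition of the $(p,q)$-derivative:
\[
\Dpq x^n=\frac{(px)^n-(qx)^n}{(p-q)x}=\frac{p^n-q^n}{p-q}\,x^{n-1}=[n]_{p,q}\,x^{n-1}.
\]
Applying this to $e_{p,q}(\lambda x)=\sum_{n\ge 0}\frac{p^{\binom n2}}{[n]_{p,q}!}\lambda^n x^n$, dropping the constant $n=0$ term, and using $[n]_{p,q}!=[n]_{p,q}[n-1]_{p,q}!$ to cancel the factor $[n]_{p,q}$, I would obtain after the index shift $n\mapsto n+1$ a series of the form $\sum_{n\ge 0}\frac{p^{\binom{n+1}2}}{[n]_{p,q}!}\lambda^{n+1}x^{n}$.

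The heart of the matter is then a single exponent identity. I would compare this last series with $\lambda\,e_{p,q}(\lambda p x)=\sum_{n\ge 0}\frac{p^{\binom n2}}{[n]_{p,q}!}\lambda^{n+1}p^{n}x^{n}$ and note that the two agree term by term precisely because of the Pascal-type relation
\[
\binom{n+1}{2}=\binom n2+n,
\]
so that $p^{\binom{n+1}2}=p^{\binom n2}\,p^{n}$. This is exactly where the shift of the argument from $x$ to $px$ is produced: the identity converts the extra power of $p$ hidden in $p^{\binom{n+1}{2}}$ into precisely the factor $p^{n}$ needed to turn $x^{n}$ into $(px)^{n}$. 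The proof of the second formula is verbatim the same computation with every $p^{\binom n2}$ replaced by $q^{\binom n2}$; the identical identity $\binom{n+1}2=\binom n2+n$ then yields the factor $q^{n}$ and hence the argument $qx$ appearing in $E_{p,q}(\lambda q x)$.

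I would close by addressing the only analytic point, namely that term-by-term $(p,q)$-differentiation and the reindexing are legitimate inside the region of convergence of the defining series, where the terms may be rearranged freely. The main — and essentially the only — obstacle is therefore purely combinatorial bookkeeping: keeping careful track of the exponent $\binom n2$ under the index shift so that the correct power of $p$ (respectively $q$) is freed up. Once the identity $\binom{n+1}2=\binom n2+n$ is in hand, both formulas drop out immediately, and no deeper machinery is required.
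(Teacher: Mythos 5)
Your proof is correct and follows exactly the route the paper intends: the paper's own proof is the one-line remark that the result ``follows from the definitions of the $(p,q)$-exponentials and the $(p,q)$-derivative,'' and your term-by-term computation with the monomial rule $\Dpq x^n=[n]_{p,q}x^{n-1}$ and the identity $\binom{n+1}{2}=\binom{n}{2}+n$ is precisely the detail being omitted there. Nothing further is needed.
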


\begin{proof}
The proof follows from the definitions of the $(p,q)$-exponentials and the $(p,q)$-derivative.
\end{proof}

\begin{proposition}
Let $n$ be a nonnegative integer, then  the following equations hold 
\begin{eqnarray}
 \Dpq^n e_{p,q}(\lambda x)=\lambda^n p^{\binom{n}{2}} e_{p,q}(\lambda p^nx),\label{nth-deriv-pqExp}\\ 
 \Dpq^n E_{p,q}(\lambda x)=\lambda^n q^{\binom{n}{2}}\lambda E_{p,q}(\lambda q^nx). \label{nth-deriv-pqexp}
\end{eqnarray}
\end{proposition}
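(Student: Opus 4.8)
The plan is to prove both formulas \eqref{nth-deriv-pqExp} and \eqref{nth-deriv-pqexp} by induction on $n$, using the previous proposition (the single-derivative rules $\Dpq e_{p,q}(\lambda x)=\lambda e_{p,q}(\lambda px)$ and $\Dpq E_{p,q}(\lambda x)=\lambda E_{p,q}(\lambda qx)$) as the engine. I focus on \eqref{nth-deriv-pqExp}; the case \eqref{nth-deriv-pqexp} is entirely parallel with the roles of $p$ and $q$ interchanged. The base case $n=0$ is immediate since $\Dpq^0 e_{p,q}(\lambda x)=e_{p,q}(\lambda x)$ and the right-hand side reduces to $\lambda^0 p^0 e_{p,q}(\lambda x)$.

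For the inductive step, I would assume \eqref{nth-deriv-pqExp} holds for some $n\geq 0$ and compute $\Dpq^{n+1}e_{p,q}(\lambda x)=\Dpq\bigl[\Dpq^n e_{p,q}(\lambda x)\bigr]$. Substituting the inductive hypothesis gives
\begin{equation*}
\Dpq^{n+1}e_{p,q}(\lambda x)=\lambda^n p^{\binom{n}{2}}\,\Dpq\bigl[e_{p,q}(\lambda p^n x)\bigr].
\end{equation*}
Now I apply the single-derivative rule to the function $x\mapsto e_{p,q}(\mu x)$ with $\mu=\lambda p^n$, which yields $\Dpq e_{p,q}(\mu x)=\mu\, e_{p,q}(\mu p x)=\lambda p^n\, e_{p,q}(\lambda p^{n+1}x)$. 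Combining these, the prefactor becomes $\lambda^n p^{\binom{n}{2}}\cdot\lambda p^n=\lambda^{n+1}p^{\binom{n}{2}+n}$, and the key is to verify the exponent identity $\binom{n}{2}+n=\binom{n+1}{2}$, which is the standard Pascal-type relation $\tfrac{n(n-1)}{2}+n=\tfrac{n(n+1)}{2}$. This gives exactly $\Dpq^{n+1}e_{p,q}(\lambda x)=\lambda^{n+1}p^{\binom{n+1}{2}}e_{p,q}(\lambda p^{n+1}x)$, completing the induction.

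The only genuine subtlety, and the step I would flag as most error-prone rather than truly hard, is bookkeeping the $p$-power exponents and confirming $\binom{n}{2}+n=\binom{n+1}{2}$; everything else is a direct appeal to the preceding proposition. For \eqref{nth-deriv-pqexp} the same scheme applies with $E_{p,q}$ and the rule $\Dpq E_{p,q}(\mu x)=\mu\,E_{p,q}(\mu q x)$, producing the factor $q^{\binom{n}{2}+n}=q^{\binom{n+1}{2}}$ by the identical exponent identity. I would remark that since both statements reduce to a one-line inductive step powered by the previous proposition, it suffices to present the argument once in detail and note that the second follows by symmetry under $p\leftrightarrow q$.
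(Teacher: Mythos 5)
Your proof is correct and follows the same route as the paper, which simply states that the result ``follows by induction from the definitions of the $(p,q)$-exponentials and the $(p,q)$-derivative''; your write-up just supplies the details (the single-derivative rule as the inductive engine and the exponent identity $\binom{n}{2}+n=\binom{n+1}{2}$) that the paper leaves implicit. One minor note: the displayed statement of \eqref{nth-deriv-pqexp} contains a stray extra factor of $\lambda$, and your argument correctly yields the intended formula $\Dpq^n E_{p,q}(\lambda x)=\lambda^n q^{\binom{n}{2}}E_{p,q}(\lambda q^n x)$.
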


\begin{proof}
The proof follows by induction from the definitions of the $(p,q)$-exponentials and the $(p,q)$-derivative.
\end{proof}

\noindent From \eqref{pqexp} we can derive 
\begin{equation}\label{pq-cs}
e_{p,q}(iz)=\sum_{n=0}^{\infty}\dfrac{p^{\binom{n}{2}}}{[n]_{p,q}!}(iz)^n=\sum_{n=0}^{\infty}\dfrac{(-1)^np^{\binom{2n}{2}}}{[2n]_{p,q}!}z^{2n}+i\sum_{n=0}^{\infty}\dfrac{(-1)^np^{\binom{2n+1}{2}}}{[2n+1]_{p,q}!}z^{2n+1}.
\end{equation}

\noindent By \eqref{pq-cs}, we define the $(p,q)$-cosine and the $(p,q)$-sine functions as follows:
\begin{eqnarray}
   \cos_{p,q}(z)&=&\sum_{n=0}^{\infty}\dfrac{(-1)^np^{\binom{2n}{2}}}{[2n]_{p,q}!}z^{2n}, \label{pqcos1}\\
   \sin_{p,q}(z)&=&\sum_{n=0}^{\infty}\dfrac{(-1)^np^{\binom{2n+1}{2}}}{[2n+1]_{p,q}!}z^{2n+1}.\label{pqsin1}
\end{eqnarray}

\noindent Analogously,  from \eqref{bigpqexp} we can derive
\begin{equation}\label{pq-cs1}
E_{p,q}(iz)=\sum_{n=0}^{\infty}\dfrac{q^{\binom{n}{2}}}{[n]_{p,q}!}(iz)^n=\sum_{n=0}^{\infty}\dfrac{(-1)^nq^{\binom{2n}{2}}}{[2n]_{p,q}!}z^{2n}+i\sum_{n=0}^{\infty}\dfrac{(-1)^nq^{\binom{2n+1}{2}}}{[2n+1]_{p,q}!}z^{2n+1}.
\end{equation} 

\noindent And by \eqref{pq-cs}, we define the big $(p,q)$-cosine and the big $(p,q)$-sine functions  as follows:
\begin{eqnarray}
   \Cos_{p,q}(z)&=&\sum_{n=0}^{\infty}\dfrac{(-1)^nq^{\binom{2n}{2}}}{[2n]_{p,q}!}z^{2n},\label{pqcosBig}\\
   \Sin_{p,q}(z)&=&\sum_{n=0}^{\infty}\dfrac{(-1)^nq^{\binom{2n+1}{2}}}{[2n+1]_{p,q}!}z^{2n+1}.\label{pqsinBig}
\end{eqnarray}

\noindent It is easy to see that 
\[\cos_{p,q}(z)=\Cos_{q,p}(z)\quad \textrm{and}\quad \sin_{p,q}(z)=\Sin_{q,p}(z).\]
Clearly, 
\begin{eqnarray*}
\Dpq\cos_{p,q}(z)&=&-\sin_{p,q}(qz),\\
\Dpq\sin_{p,q}(z)&=&\cos_{p,q}(pz),\\
\Dpq\Cos_{p,q}(z)&=&-\Sin_{p,q}(qz),\\
\Dpq\Sin_{p,q}(z)&=&\Cos_{p,q}(qz).
\end{eqnarray*}

\begin{proposition}
The following equations hold
\begin{eqnarray*}
&&\pqcos(x)\pqCos(x)+\pqsin(x)\pqSin(x)=1,\\
&&\pqsin(x)\pqCos(x)-\pqcos(x)\pqSin(x)=0.
\end{eqnarray*}
\end{proposition}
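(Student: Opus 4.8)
The plan is to obtain both identities simultaneously by specializing the fundamental relation \eqref{invpqexpo}, namely $e_{p,q}(x)E_{p,q}(-x)=1$, to a purely imaginary argument and then separating real and imaginary parts. This mirrors the classical derivation of $\cos^2+\sin^2=1$ from $e^{ix}e^{-ix}=1$, with the subtlety that here the two exponentials $e_{p,q}$ and $E_{p,q}$ carry the \emph{two different} pairs of trigonometric functions.

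First I would record the Euler-type splittings already available in the text. By \eqref{pq-cs}, the real and imaginary parts of $e_{p,q}(ix)$ are precisely the $(p,q)$-cosine and $(p,q)$-sine, so that
\[
e_{p,q}(ix)=\pqcos(x)+i\,\pqsin(x),
\]
and by \eqref{pq-cs1} likewise
\[
E_{p,q}(ix)=\pqCos(x)+i\,\pqSin(x).
\]
Next I would compute $E_{p,q}(-ix)$. Since the series \eqref{pqcosBig} for $\pqCos$ contains only even powers and the series \eqref{pqsinBig} for $\pqSin$ only odd powers, $\pqCos$ is even and $\pqSin$ is odd, whence
\[
E_{p,q}(-ix)=\pqCos(x)-i\,\pqSin(x).
\]

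The key step is then to substitute $x\mapsto ix$ into \eqref{invpqexpo}, giving $e_{p,q}(ix)E_{p,q}(-ix)=1$, that is
\[
\bigl(\pqcos(x)+i\,\pqsin(x)\bigr)\bigl(\pqCos(x)-i\,\pqSin(x)\bigr)=1.
\]
Expanding the left-hand side and collecting the real and imaginary parts separately yields
\[
\pqcos(x)\pqCos(x)+\pqsin(x)\pqSin(x)=1
\]
and
\[
\pqsin(x)\pqCos(x)-\pqcos(x)\pqSin(x)=0,
\]
which are exactly the two asserted equations.

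The only point requiring a little care—and the main (minor) obstacle—is justifying that \eqref{invpqexpo} remains valid at the imaginary argument $x=ix$. I would argue this at the level of power series: \eqref{invpqexpo} is an identity between the coefficient-wise Cauchy product of the series \eqref{pqexp} and \eqref{bigpqexp}, so it persists for every value of the argument in the common region of convergence, in particular for imaginary arguments. The parity statements used to pass from $E_{p,q}(ix)$ to $E_{p,q}(-ix)$ are immediate from the monomial structure of \eqref{pqcosBig}--\eqref{pqsinBig}, so no further justification is needed.
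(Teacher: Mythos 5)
Your proposal is correct and follows exactly the route the paper intends: the paper's own proof consists of the single line ``The proof follows from \eqref{invpqexpo},'' and your argument—substituting $ix$ into $e_{p,q}(x)E_{p,q}(-x)=1$, using the Euler-type splittings \eqref{pq-cs}, \eqref{pq-cs1} together with the evenness of $\pqCos$ and oddness of $\pqSin$, and separating real and imaginary parts—is precisely the omitted computation. No discrepancies to report.
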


\begin{proof}
The proof follows from \eqref{invpqexpo}.
\end{proof}

\noindent Let us now define the hyperbolic $(p,q)$-cosine and the hyperbolic $(p,q)$-sine functions as follows 
\begin{eqnarray}
   \cosh_{p,q}(z)&=&\dfrac{e_{p,q}(z)+e_{p,q}(-z)}{2}=\sum_{n=0}^{\infty}\dfrac{p^{\binom{2n}{2}}}{[2n]_{p,q}!}z^{2n},\label{hyper-cos}\\
   \sinh_{p,q}(z)&=&\dfrac{e_{p,q}(z)-e_{p,q}(-z)}{2}=\sum_{n=0}^{\infty}\dfrac{p^{\binom{2n+1}{2}}}{[2n+1]_{p,q}!}z^{2n+1},\label{hyper-sin}\\
   Cosh_{p,q}(z)&=&\dfrac{E_{p,q}(z)+E_{p,q}(-z)}{2}=\sum_{n=0}^{\infty}\dfrac{q^{\binom{2n}{2}}}{[2n]_{p,q}!}z^{2n},\label{hyper-big-cos}\\
   Sinh_{p,q}(z)&=&\dfrac{E_{p,q}(z)-E_{p,q}(-z)}{2}=\sum_{n=0}^{\infty}\dfrac{q^{\binom{2n+1}{2}}}{[2n+1]_{p,q}!}z^{2n+1}.\label{hyper-big-sin}
\end{eqnarray}

\begin{proposition}
The following equations hold
\begin{eqnarray*}
&& \cosh_{p,q}(z)Cosh_{p,q}(z)-\sinh_{p,q}(z)Sinh_{p,q}(z)=1,\\
&& \cosh_{p,q}(z)Sinh_{p,q}(z)-\sinh_{p,q}(z)Cosh_{p,q}(z)=0.
\end{eqnarray*}
\end{proposition}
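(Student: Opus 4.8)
The plan is to derive both identities from the known relation $e_{p,q}(x)E_{p,q}(-x)=1$ of \eqref{invpqexpo}, exactly in the spirit of the proof given earlier for the trigonometric analogues. First I would record the hyperbolic functions explicitly in terms of the exponentials, using the defining relations \eqref{hyper-cos}--\eqref{hyper-big-sin}:
\begin{eqnarray*}
\cosh_{p,q}(z)=\tfrac{1}{2}\bigl(e_{p,q}(z)+e_{p,q}(-z)\bigr),&\quad& \sinh_{p,q}(z)=\tfrac{1}{2}\bigl(e_{p,q}(z)-e_{p,q}(-z)\bigr),\\
Cosh_{p,q}(z)=\tfrac{1}{2}\bigl(E_{p,q}(z)+E_{p,q}(-z)\bigr),&\quad& Sinh_{p,q}(z)=\tfrac{1}{2}\bigl(E_{p,q}(z)-E_{p,q}(-z)\bigr).
\end{eqnarray*}

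Next I would substitute these four expressions into the two bilinear combinations on the left-hand sides and expand. For the first identity, $\cosh_{p,q}(z)Cosh_{p,q}(z)-\sinh_{p,q}(z)Sinh_{p,q}(z)$ becomes a quarter of a product-difference; the cross terms of the form $e_{p,q}(z)E_{p,q}(-z)$ and $e_{p,q}(-z)E_{p,q}(z)$ should cancel, leaving $\tfrac{1}{2}\bigl(e_{p,q}(z)E_{p,q}(z)+e_{p,q}(-z)E_{p,q}(-z)\bigr)$ — which is \emph{not} directly $1$. This indicates that the clean cancellation one expects requires the crossed pairing, so the key algebraic step is to recognise the two useful specializations of \eqref{invpqexpo}, namely $e_{p,q}(z)E_{p,q}(-z)=1$ and $e_{p,q}(-z)E_{p,q}(z)=1$, and to group the expanded terms so that precisely these pairings appear. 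I would therefore multiply out carefully, collecting the four resulting products $e_{p,q}(\pm z)E_{p,q}(\pm z)$ and tracking signs, so that the surviving terms are the two that equal $1$ by \eqref{invpqexpo} and the spurious terms cancel against each other.

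For the second identity, $\cosh_{p,q}(z)Sinh_{p,q}(z)-\sinh_{p,q}(z)Cosh_{p,q}(z)$, the same substitution yields a quarter of the difference of two products; here the ``diagonal'' terms $e_{p,q}(z)E_{p,q}(z)$ and $e_{p,q}(-z)E_{p,q}(-z)$ should cancel identically by symmetry, while the remaining cross terms reduce to $\tfrac{1}{2}\bigl(e_{p,q}(-z)E_{p,q}(z)-e_{p,q}(z)E_{p,q}(-z)\bigr)$, which is $\tfrac{1}{2}(1-1)=0$ again by \eqref{invpqexpo}. The main obstacle I anticipate is purely bookkeeping: one must pair the expanded terms against the correct instance of \eqref{invpqexpo} (the crossed pairing rather than the same-sign one), since $e_{p,q}(z)E_{p,q}(z)$ is not itself equal to $1$; getting the sign grouping right is the only real content. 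I expect both identities to follow in a few lines once the expansion is organised this way.
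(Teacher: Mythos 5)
Your approach is exactly the paper's: the paper's entire proof is the one-line remark that both identities follow from \eqref{invpqexpo}, and your expansion of the four hyperbolic functions in terms of $e_{p,q}(\pm z)$ and $E_{p,q}(\pm z)$ is the intended computation. One correction to your bookkeeping for the first identity: the expansion already comes out right with no regrouping needed — the diagonal terms $e_{p,q}(z)E_{p,q}(z)$ and $e_{p,q}(-z)E_{p,q}(-z)$ are the ones that cancel, and the crossed terms survive, giving $\tfrac{1}{2}\bigl(e_{p,q}(z)E_{p,q}(-z)+e_{p,q}(-z)E_{p,q}(z)\bigr)=\tfrac{1}{2}(1+1)=1$ directly by the two specializations of \eqref{invpqexpo}, so your intermediate claim that the crossed terms cancel is just a sign slip that your own subsequent discussion correctly repairs.
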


\begin{proof}
The proof follows from \eqref{invpqexpo}.
\end{proof}

\subsection{$(p,q)$-Gamma function}

\begin{definition}\label{pqgamma}\cite{njionou-2015-1}
Let $x$ be a complex number, we define the  $(p,q)$-Gamma function as
\begin{equation}\label{pqgam}
\Gamma_{p,q}(x)=\dfrac{(p\ominus q)_{p,q}^{\infty}}{(p^x\ominus q^x)^\infty_{p,q}}(p-q)^{1-x},\; 0<q<p.
\end{equation}
\end{definition}

\begin{proposition}\cite{njionou-2015-1}
The $(p,q)$-Gamma function fulfils the following fundemental relation
\begin{equation}\label{gammarec}
\Gamma_{p,q}(x+1)=[x]_{p,q}\Gamma_{p,q}(x).
\end{equation}

\end{proposition}

\begin{remark}
If $n$ is a nonnegative integer, it follows from (\ref{gammarec}) that 
\[\Gamma_{p,q}(n+1)=[n]_{p,q}!.\]
It can be also easyly seen from the definition that
\[\Gamma_{p,q}(n+1)=\dfrac{(p\ominus q)^n_{p,q}}{(p-q)^n}.\]
\end{remark}

\noindent Very recently, a $(p,q)$-integral representation of the $(p,q)$-Gamma function was given in \cite{aral} when the argument is a nonnegative integer as follows
\begin{equation}\label{pq-gamma-integral}
\Gamma_{p,q}(n)=\int_{0}^{\infty}p^{\frac{(n-1)(n-2)}{2}}t^{n-1}E_{p,q}(-qt)\dpq t. 
\end{equation}

\noindent \textcolor{black}{ Note that in this definition, there is maybe a mistake, the factor $p^{\frac{(n-1)(n-2)}{2}}$ should be replaced by $p^{\frac{n(n-1)}{2}}$ so we can clearly get $\Gamma_{p,q}(n+1)=[n]_{p,q}\Gamma_{p,q}(n)$. } Relation \eqref{pq-gamma-integral} enables to prove \eqref{gammarec} again using the formula of the $(p,q)$-integration by part \eqref{pq-int-part}.

\noindent Now, we propose another definition of the $(p,q)$-Gamma function which will be frequently use throughout the text.  

\begin{definition}
For $0<q<p$, we define a $(p,q)$-Gamma function by
%\begin{equation}\label{pq-gam-int2}
%\Gamma_{p,q}(z)=p^{\frac{(z-1)(z-2)}{2}}\int_{0}^{\infty}t^{z-1}E_{p,q}(-qt)\dpq t.
%\end{equation}
\textcolor{black}{
\begin{equation}\label{pq-gam-int2}
\Gamma_{p,q}(z)=p^{\frac{z(z-1)}{2}}\int_{0}^{\infty}t^{z-1}E_{p,q}(-qt)\dpq t.
\end{equation}}
\end{definition} 

\begin{proposition}
Let $z$ be a complex number such that $\Gamma_{p,q}(z+1)$ and $\Gamma_{p,q}(z)$ exist, then
\begin{equation}
\Gamma_{p,q}(z+1)=[z]_{p,q}\Gamma_{p,q}(z).
\end{equation}
\end{proposition}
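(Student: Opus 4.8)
The plan is to reduce the functional equation to a recurrence for the underlying integral and then to check that the $p$-power prefactor in \eqref{pq-gam-int2} is tuned exactly to make the recurrence collapse to the desired relation. Write $I(z)=\int_{0}^{\infty}t^{z-1}E_{p,q}(-qt)\,\dpq t$, so that $\Gamma_{p,q}(z)=p^{\frac{z(z-1)}{2}}I(z)$, and aim to show $I(z+1)=p^{-z}[z]_{p,q}I(z)$.

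First I would apply the $(p,q)$-integration by parts formula \eqref{pq-int-part} to $I(z)$. The integrand is engineered to match the left-hand side $f(pt)\,\Dpq g(t)$ of \eqref{pq-int-part}: taking $g(t)=-E_{p,q}(-t)$, the proposition $\Dpq E_{p,q}(\lambda x)=\lambda E_{p,q}(\lambda qx)$ (with $\lambda=-1$) gives $\Dpq g(t)=E_{p,q}(-qt)$, while taking $f(t)=p^{1-z}t^{z-1}$ gives $f(pt)=t^{z-1}$. The two remaining ingredients are $g(qt)=-E_{p,q}(-qt)$ and, from the definition of the $(p,q)$-derivative, $\Dpq t^{z-1}=\frac{p^{z-1}-q^{z-1}}{p-q}t^{z-2}=[z-1]_{p,q}t^{z-2}$, so that $\Dpq f(t)=p^{1-z}[z-1]_{p,q}t^{z-2}$.

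Substituting into \eqref{pq-int-part}, the boundary term is $\left[-p^{1-z}t^{z-1}E_{p,q}(-t)\right]_{0}^{\infty}$; I would argue that it vanishes, since $t^{z-1}\to 0$ at the origin and $E_{p,q}(-t)$ decays at infinity, which is precisely the regime in which $\Gamma_{p,q}(z)$ and $\Gamma_{p,q}(z+1)$ are assumed to exist. What survives is
\[
I(z)=p^{1-z}[z-1]_{p,q}\int_{0}^{\infty}t^{z-2}E_{p,q}(-qt)\,\dpq t=p^{1-z}[z-1]_{p,q}\,I(z-1),
\]
that is, $I(z+1)=p^{-z}[z]_{p,q}I(z)$. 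Feeding this into the definition \eqref{pq-gam-int2} then finishes the argument, since
\[
\Gamma_{p,q}(z+1)=p^{\frac{(z+1)z}{2}}I(z+1)=p^{\frac{(z+1)z}{2}-z}[z]_{p,q}I(z)=p^{\frac{z(z-1)}{2}}[z]_{p,q}I(z)=[z]_{p,q}\Gamma_{p,q}(z),
\]
the identity $\frac{(z+1)z}{2}-z=\frac{z(z-1)}{2}$ being exactly the bookkeeping that makes the $p$-powers agree. The one genuinely delicate point is the vanishing of the boundary term, together with the convergence needed to legitimize \eqref{pq-int-part}; everything else is a direct substitution.
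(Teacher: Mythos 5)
Your proof is correct and follows essentially the same route as the paper: one application of the $(p,q)$-integration by parts formula \eqref{pq-int-part} with $g$ an antiderivative of $E_{p,q}(-qt)$ and $f$ a power of $t$, a vanishing boundary term, and the observation that $\frac{z(z+1)}{2}-z=\frac{z(z-1)}{2}$ reconciles the prefactors. The only cosmetic difference is that you factor out the $p$-power and phrase the step as a recurrence for $I(z)$ before shifting $z\mapsto z+1$, whereas the paper carries the prefactor through the computation of $\Gamma_{p,q}(z+1)$ directly.
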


\begin{proof}
Using equation \eqref{pq-gam-int2} and the $(p,q)$-integration by part \eqref{pq-int-part}, we have:
%\begin{eqnarray*}
%\Gamma_{p,q}(z+1)&=& p^{\frac{(z)(z-1)}{2}}\int_{0}^{\infty}t^{z}E_{p,q}(-qt)\dpq t\\
%&=& -p^{\frac{(z-1)(z-2)}{2}}\int_{0}^{\infty}(pt)^{z}\Dpq E_{p,q}(-t)\dpq t\\
%&=& -p^{\frac{(z-1)(z-2)}{2}}\left[ t^{z}E_{p,q}(-t)\right]_{0}^{\infty}+p^{\frac{(z-1)(z-2)}{2}}[z]_{p,q}\int_{0}^{\infty}t^{z-1}E_{p,q}(-qt)\dpq t\\
%&=&[z]_{p,q}\Gamma_{p,q}(z).
%\end{eqnarray*}
\textcolor{black}{%Proof for the modified definition
\begin{eqnarray*}
\Gamma_{p,q}(z+1)&=& p^{\frac{z(z+1)}{2}}\int_{0}^{\infty}t^{z}E_{p,q}(-qt)\dpq t\\
&=& -p^{\frac{z(z-1)}{2}}\int_{0}^{\infty}(pt)^{z}\Dpq E_{p,q}(-t)\dpq t\\
&=& -p^{\frac{z(z-1)}{2}}\left[ t^{z}E_{p,q}(-t)\right]_{0}^{\infty}+p^{\frac{z(z-1)}{2}}[z]_{p,q}\int_{0}^{\infty}t^{z-1}E_{p,q}(-qt)\dpq t\\
&=&[z]_{p,q}\Gamma_{p,q}(z).
\end{eqnarray*}}
\end{proof}

\section{Two $(p,q)$-Laplace transforms}

\subsection{The $(p,q)$-Laplace transform of the first kind}

\begin{definition}
For a given function $f(t)$, we define its $(p,q)$-Laplace transform of the first kind as the function 
\begin{equation}\label{pqlaplace1}
F(s)=L_{p,q}\{f(t)\}(s)=\int_{0}^{\infty}f(t)E_{p,q}(-qts)\dpq t, \;\; s>0.
\end{equation}
\end{definition}

\begin{proposition}
For any two complex numbers $\alpha$ and $\beta$, we have 
\begin{equation*}
L_{p,q}\{\alpha f(t)+\beta g(t)\}=\alpha L_{p,q}\{f(t)\}+\beta L_{p,q}\{g(t)\}.
\end{equation*}
\end{proposition}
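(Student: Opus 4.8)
The final statement asserts the linearity of the $(p,q)$-Laplace transform of the first kind, defined by the improper $(p,q)$-integral in \eqref{pqlaplace1}.

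\medskip

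The plan is to reduce linearity of the transform to linearity of the underlying $(p,q)$-integral. First I would write out both sides using the definition \eqref{pqlaplace1}: the left-hand side is $\int_0^\infty \bigl(\alpha f(t)+\beta g(t)\bigr) E_{p,q}(-qts)\,\dpq t$, and I would observe that the integrand factors as $\alpha f(t)E_{p,q}(-qts)+\beta g(t)E_{p,q}(-qts)$. The whole argument then hinges on showing that the improper $(p,q)$-integral is itself a linear operator, i.e. that $\int_0^\infty\bigl(\alpha u(t)+\beta v(t)\bigr)\dpq t = \alpha\int_0^\infty u(t)\dpq t + \beta\int_0^\infty v(t)\dpq t$ for the two functions $u(t)=f(t)E_{p,q}(-qts)$ and $v(t)=g(t)E_{p,q}(-qts)$.

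\medskip

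Second, I would establish this linearity of the improper integral directly from its series definition \eqref{improper1}. Since $\int_0^\infty h(t)\,\dpq t = (p-q)\sum_{j=-\infty}^{\infty}\frac{q^j}{p^{j+1}}\,h\!\left(\frac{q^j}{p^{j+1}}\right)$, substituting $h=\alpha u+\beta v$ gives a bi-infinite sum whose general term is $\frac{q^j}{p^{j+1}}\bigl(\alpha\,u(\tfrac{q^j}{p^{j+1}})+\beta\,v(\tfrac{q^j}{p^{j+1}})\bigr)$. Splitting this term-by-term into an $\alpha$-part and a $\beta$-part, and pulling the constants $\alpha,\beta$ outside their respective sums, reproduces exactly $\alpha\int_0^\infty u\,\dpq t+\beta\int_0^\infty v\,\dpq t$. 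The manipulation is the elementary linearity of a convergent series in each summand.

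\medskip

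The only genuine subtlety—and the step I expect to be the main obstacle—is the rearrangement/splitting of the bi-infinite series, which is legitimate precisely when the two series $\int_0^\infty f(t)E_{p,q}(-qts)\,\dpq t$ and $\int_0^\infty g(t)E_{p,q}(-qts)\,\dpq t$ converge, i.e. when the transforms $L_{p,q}\{f\}(s)$ and $L_{p,q}\{g\}(s)$ exist. I would therefore state that the identity holds for every $s>0$ at which both transforms are defined, under which hypothesis absolute (or at least term-by-term) convergence justifies separating the sum; this is implicit in the proposition's assertion that the transforms appear on the right-hand side. Beyond that caveat, the proof is a routine consequence of the definitions, and I would close by noting that the identical argument applies verbatim to the Laplace transform of the second kind.
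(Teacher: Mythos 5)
Your proof is correct and follows the same route as the paper, which simply states that the result ``follows by \eqref{pqlaplace1}''; you have merely filled in the details the author leaves implicit, namely the term-by-term splitting of the bi-infinite series defining the improper $(p,q)$-integral and the (appropriate) caveat that both transforms must converge at the given $s$. No gap.
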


\begin{proof}
The proof follows by \eqref{pqlaplace1}. 
\end{proof}

\noindent In what follows, we give some examples. From \eqref{pqlaplace1}, we note that:
\begin{eqnarray*}
L_{p,q}\{1\}(s)&=&\int_{0}^{\infty}E_{p,q}(-qst)\dpq t=-\dfrac{1}{s}\int_{0}^{\infty}\Dpq E_{p,q}(-st)\dpq t\\
&=& -\dfrac{1}{s}\left[ E_{p,q}(-st)\right]_0^{\infty}=\dfrac{1}{s},\quad s>0.\\
L_{p,q}\{t\}(s)&=&\int_{0}^{\infty}t E_{p,q}(-qst)\dpq t=-\dfrac{1}{ps}\int_{0}^{\infty}(pt)\Dpq E_{p,q}(-st)\dpq t\\
&=&-\dfrac{1}{ps}\left\{\left[tE_{p,q}(-st)\right]_{0}^{\infty}-\int_{0}^{\infty}E_{p,q}(-qst)\dpq t\right\}\\
&=&\dfrac{1}{ps^2},\quad s>0.\\
L_{p,q}\{1+5t\}(s)&=&L_{p,q}\{1\}(s)+5L_{p,q}\{t\}(s)=\dfrac{1}{s}+\dfrac{5}{ps^2},\quad s>0.
\end{eqnarray*}

\begin{proposition}\label{dil1}
Let $\alpha$ be a non zero complex number, then 
\begin{eqnarray}
\int_{0}^{\infty}f(\alpha t)\dpq t=\dfrac{1}{\alpha}\int_{0}^{\infty}f(t)\dpq t.\label{changevar}
\end{eqnarray} 
\end{proposition}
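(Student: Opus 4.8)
The plan is to compute both sides of \eqref{changevar} directly from the definition \eqref{improper1} of the improper $(p,q)$-integral and observe that the substitution amounts to a reindexing of the bilateral sum. Writing $g(t)=f(\alpha t)$, the left-hand side becomes
\begin{equation*}
\int_0^\infty f(\alpha t)\dpq t=(p-q)\sum_{j=-\infty}^{\infty}\dfrac{q^j}{p^{j+1}}f\!\left(\alpha\,\frac{q^j}{p^{j+1}}\right).
\end{equation*}
The right-hand side, by the same definition, is $\dfrac{1}{\alpha}(p-q)\sum_{j=-\infty}^{\infty}\dfrac{q^j}{p^{j+1}}f\!\left(\frac{q^j}{p^{j+1}}\right)$, so the identity to be verified is that the first sum equals $\tfrac{1}{\alpha}$ times the second.

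The key step is to exploit the scale-invariance of the node set $\{q^j/p^{j+1}:j\in\Z\}$ under the dilation $t\mapsto\alpha t$ combined with a shift of the summation index. First I would factor $\alpha$ into the argument: the node $\alpha q^j/p^{j+1}$ in the left sum should be matched with a node $q^{j'}/p^{j'+1}$ in the right sum. Unlike the pure $q$-case, where $\alpha=q^m$ shifts the index cleanly, here $\alpha$ is an arbitrary nonzero complex number, so there is in general no index $j'$ with $q^{j'}/p^{j'+1}=\alpha q^j/p^{j+1}$. Hence the honest statement is not a term-by-term reindexing but rather that the dilated integral is itself a $(p,q)$-integral over a rescaled lattice, and equals $\tfrac{1}{\alpha}$ times the original by pulling the constant weight $1/\alpha$ out of the prefactor $(p-q)\,q^j/p^{j+1}$ after substituting. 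Concretely, I would set $u=\alpha t$ so that $t=u/\alpha$; each term $\tfrac{q^j}{p^{j+1}}f(\alpha q^j/p^{j+1})$ is reinterpreted as $\tfrac1\alpha\cdot\tfrac{q^j}{p^{j+1}}\alpha\,f(\alpha q^j/p^{j+1})$, and the weighted lattice $\{\alpha q^j/p^{j+1}\}$ carries exactly the measure of a $(p,q)$-integral whose total mass is scaled by $1/\alpha$.

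The main obstacle I anticipate is precisely this point: the claim as literally stated treats the improper $(p,q)$-integral as if it were genuinely translation/dilation covariant, which holds rigorously only when $\alpha$ is a power of $q/p$ (so the lattice maps to itself) or when one interprets the right-hand integral as taken over the dilated node set. I would therefore make the substitution explicit, note that the bilateral sum $\sum_{j=-\infty}^{\infty}$ is invariant under any shift $j\mapsto j+m$, and argue that for $\alpha$ of the form $(q/p)^m$ the identity is exact term-by-term after shifting, while the general case is the natural formal extension consistent with the classical change-of-variables $\int_0^\infty f(\alpha t)\,dt=\tfrac1\alpha\int_0^\infty f(t)\,dt$. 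The remaining steps are the routine absolute-convergence check that justifies rearranging the bilateral series and the bookkeeping of the factor $(p-q)$, both of which follow immediately from \eqref{improper1}.
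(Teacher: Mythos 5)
Your diagnosis of the obstruction is exactly right, but the write-up does not actually prove the proposition. The pivotal step, where you ``reinterpret'' each term $\tfrac{q^j}{p^{j+1}}f\bigl(\alpha q^j/p^{j+1}\bigr)$ as $\tfrac{1}{\alpha}\cdot\alpha\cdot\tfrac{q^j}{p^{j+1}}f\bigl(\alpha q^j/p^{j+1}\bigr)$, is a tautology: multiplying and dividing by $\alpha$ does not convert a sum over the dilated node set $\{\alpha q^j/p^{j+1}\}$ into the sum over the original node set $\{q^j/p^{j+1}\}$ that appears on the right-hand side of \eqref{changevar}. Your closing paragraph then concedes that the term-by-term reindexing is exact only when $\alpha=(q/p)^m$ for some $m\in\Z$ and calls the general case a ``formal extension''; that is a restriction of the statement, not a proof of it. (For reference, the paper states Proposition \ref{dil1} with no proof at all, so there is no argument there to fall back on.)

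The gap is not repairable as stated, because with the definition \eqref{improper1} the identity is genuinely false for general nonzero $\alpha$: if $\alpha$ is not a power of $q/p$, choose $f$ to be a continuous bump supported in a small neighbourhood of the node $1/p$ that avoids every point of the discrete set $\{\alpha q^j/p^{j+1}: j\in\Z\}$; then the left side of \eqref{changevar} vanishes while the right side equals $\tfrac{p-q}{\alpha p}f(1/p)\neq 0$. So you must commit to one of the two fixes you already hint at: either (i) restrict to $\alpha=(q/p)^m$, in which case the shift $j\mapsto j+m$ together with $\tfrac{q^{j-m}}{p^{j-m+1}}=(p/q)^m\tfrac{q^{j}}{p^{j+1}}$ gives a complete two-line proof, or (ii) state explicitly that the right-hand integral is to be read on the rescaled lattice (equivalently, restrict to a class of functions, such as those for which the bilateral sum is independent of the lattice parameter). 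Your proposal identifies both options but proves neither, and the convergence/rearrangement remarks at the end do not touch the actual difficulty.
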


\begin{theorem}[Scaling]\label{theo-dil}
Let $a$ be a non zero complex number, then the following formula applies 
\begin{equation}\label{theo-dil-equation}
L_{p,q}\{f(at)\}(s)=\dfrac{1}{a}L_{p,q}\{f(t)\}\left(\frac{s}{a}\right).
\end{equation}
\end{theorem}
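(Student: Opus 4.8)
The plan is to derive the scaling rule directly from the definition \eqref{pqlaplace1} by recognizing the factor $E_{p,q}(-qts)$ as a dilation of a single auxiliary function, and then invoking the change-of-variable formula \eqref{changevar} of Proposition \ref{dil1}. First I would write
\[
L_{p,q}\{f(at)\}(s)=\int_{0}^{\infty}f(at)E_{p,q}(-qts)\dpq t,
\]
and observe that the entire integrand is a scaled copy of one fixed function. Set $h(t)=f(t)E_{p,q}\!\left(-qt\,\tfrac{s}{a}\right)$; then $h(at)=f(at)E_{p,q}\!\left(-q(at)\tfrac{s}{a}\right)=f(at)E_{p,q}(-qts)$, so the integrand above is exactly $h(at)$.

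The key step is then a single application of \eqref{changevar} with $\alpha=a$, namely
\[
\int_{0}^{\infty}h(at)\dpq t=\frac{1}{a}\int_{0}^{\infty}h(t)\dpq t.
\]
By construction the left-hand side is $L_{p,q}\{f(at)\}(s)$, while the right-hand side equals $\tfrac{1}{a}\int_{0}^{\infty}f(t)E_{p,q}\!\left(-qt\,\tfrac{s}{a}\right)\dpq t=\tfrac{1}{a}L_{p,q}\{f(t)\}\!\left(\tfrac{s}{a}\right)$, again reading off \eqref{pqlaplace1}. Comparing the two ends yields \eqref{theo-dil-equation}.

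The part needing the most care is the bookkeeping in the exponent: one must notice that the argument $-qts$ factors as $-q(at)\cdot(s/a)$, so that the dilation of $t$ by $a$ and the rescaling of the transform variable $s\mapsto s/a$ are produced \emph{simultaneously} by the one substitution. Beyond that, the only hypothesis to bear in mind is that Proposition \ref{dil1} presupposes convergence of the improper $(p,q)$-integral; since we merely rearrange the same family of sampled values, convergence of $L_{p,q}\{f(at)\}(s)$ and of $L_{p,q}\{f(t)\}(s/a)$ is equivalent and no new restriction is introduced. I do not anticipate a genuine obstacle here: the statement is essentially a repackaging of the dilation identity \eqref{changevar}, and the proof is short once the auxiliary function $h$ is chosen correctly.
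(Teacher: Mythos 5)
Your proof is correct and follows exactly the paper's own argument: rewrite $E_{p,q}(-qts)=E_{p,q}\bigl(-q(at)\tfrac{s}{a}\bigr)$ so the integrand becomes a dilation by $a$ of a single function, then apply Proposition \ref{dil1}. Making the auxiliary function $h$ explicit is just a slightly more formal packaging of the same computation.
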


\begin{proof}
Using the definition and Proposition \ref{dil1}, we have
\begin{eqnarray*}
L_{p,q}\{f(at)\}(s)&=&\int_{0}^{\infty}f(at)\Epq(-qst)\dpq t\\
&=& \int_{0}^{\infty}f(at)\Epq(-aq\frac{s}{a}t)\dpq t\\
&=&\frac{1}{a}\int_{0}^{\infty}f(t)\Epq(-q\frac{s}{a}t)\dpq t=\dfrac{1}{a}L_{p,q}\{f(t)\}\left(\frac{s}{a}\right).
\end{eqnarray*} 
\end{proof}

\begin{theorem}\label{Theo-laplace-power}
For $\alpha>-1$, we have the following 
\begin{equation}
L_{p,q}(t^\alpha)=\dfrac{\Gamma_{p,q}(\alpha+1)}{p^{\frac{\alpha(\alpha+1)}{2}}s^{\alpha+1}}.
\end{equation}
\end{theorem}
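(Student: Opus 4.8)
The plan is to reduce $\Lpq\{t^\alpha\}(s)$ directly to the $(p,q)$-integral representation \eqref{pq-gam-int2} of the $(p,q)$-Gamma function, using the dilation identity \eqref{changevar} to strip off all dependence on $s$. Starting from the definition \eqref{pqlaplace1}, I would write
\begin{equation*}
\Lpq\{t^\alpha\}(s)=\int_{0}^{\infty}t^{\alpha}\Epq(-qst)\,\dpq t,
\end{equation*}
and introduce the auxiliary function $h(t)=t^{\alpha}\Epq(-qt)$. Since $h(st)=s^{\alpha}t^{\alpha}\Epq(-qst)$, the integrand above is exactly $s^{-\alpha}h(st)$, so the whole $s$-dependence is now packaged inside a single dilation of $h$.

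Next I would apply Proposition \ref{dil1} with dilation factor $s$ to $h$, namely
\begin{equation*}
\int_{0}^{\infty}h(st)\,\dpq t=\frac{1}{s}\int_{0}^{\infty}h(t)\,\dpq t,
\end{equation*}
which yields
\begin{equation*}
\Lpq\{t^\alpha\}(s)=\frac{1}{s^{\alpha}}\int_{0}^{\infty}h(st)\,\dpq t=\frac{1}{s^{\alpha+1}}\int_{0}^{\infty}t^{\alpha}\Epq(-qt)\,\dpq t.
\end{equation*}
The remaining integral is precisely the one defining $\Gamma_{p,q}(\alpha+1)$ in \eqref{pq-gam-int2} (take $z=\alpha+1$, so that $z(z-1)/2=\alpha(\alpha+1)/2$). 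Reading that definition backwards gives $\int_{0}^{\infty}t^{\alpha}\Epq(-qt)\,\dpq t=p^{-\alpha(\alpha+1)/2}\Gamma_{p,q}(\alpha+1)$, and substituting produces the claimed formula.

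The single point that demands care is convergence of the improper $(p,q)$-integral \eqref{improper1}, and this is exactly where the hypothesis $\alpha>-1$ enters. After the change of scale the transform is literally the integral that defines $\Gamma_{p,q}(\alpha+1)$, so $\Lpq\{t^\alpha\}(s)$ is meaningful precisely on the range of $\alpha$ for which the $(p,q)$-Gamma function is: the nodes near $t=0$ contribute terms of size $\sim x^{\alpha+1}$, which are summable if and only if $\alpha+1>0$, while the large-$t$ behaviour is controlled by $\Epq(-qt)$ exactly as in \eqref{pq-gam-int2}. Granting convergence, the dilation identity \eqref{changevar} merely rescales the lattice of evaluation points in \eqref{improper1} and so applies verbatim to $h$; everything else is the bookkeeping of the powers of $s$ and of the single prefactor $p^{-\alpha(\alpha+1)/2}$. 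No induction or integration by parts is needed here --- the argument is just a change of scale followed by recognition of the Gamma integral.
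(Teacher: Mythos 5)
Your argument is correct and is essentially the paper's own proof: the paper likewise rescales $t\mapsto st$ (implicitly via Proposition \ref{dil1}) to get $\frac{1}{s^{\alpha+1}}\int_0^\infty t^{\alpha}E_{p,q}(-qt)\,d_{p,q}t$ and then recognizes this integral as $p^{-\alpha(\alpha+1)/2}\Gamma_{p,q}(\alpha+1)$ from \eqref{pq-gam-int2}. Your added remarks on convergence near $t=0$ for $\alpha>-1$ are a welcome supplement the paper omits, but the route is the same.
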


\begin{proof}
We have 
\begin{eqnarray*}
 L_{p,q}\{t^\alpha\}(s)&=& \int_{0}^{\infty}t^\alpha E_{p,q}(-qst)\dpq t=\dfrac{1}{s^{\alpha+1}}\int_{0}^{\infty}E_{p,q}(-qt)t^{\alpha}\dpq t\\
 &=&\dfrac{1}{p^{\frac{\alpha(\alpha+1)}{2}}s^{\alpha+1}}\int_{0}^{\infty}p^{\frac{\alpha(\alpha+1)}{2}}t^{(\alpha+1)-1}E_{p,q}(-qt)\dpq t\\
 &=&\dfrac{\Gamma_{p,q}(\alpha+1)}{p^{\frac{\alpha(\alpha+1)}{2}}s^{\alpha+1}}.
\end{eqnarray*}
\end{proof}

\noindent The following theorem is a particular case of Theorem \eqref{Theo-laplace-power} when $\alpha=n$ is a nonnegative integer. 
\begin{theorem}
Let $n\in\N$, then for $s>0$, we have 
%\begin{equation}
\begin{equation}\label{trans01}
L_{p,q}\{t^n\}(s)=\dfrac{[n]_{p,q}!}{p^{\binom{n+1}{2}}s^{n+1}}.
\end{equation}
%\end{equation}
\end{theorem}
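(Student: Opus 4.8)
The plan is to obtain \eqref{trans01} as an immediate specialization of Theorem \ref{Theo-laplace-power}, since the statement is flagged as the particular case $\alpha=n$. First I would set $\alpha=n$ in the general power formula, which gives
\[
L_{p,q}\{t^n\}(s)=\dfrac{\Gamma_{p,q}(n+1)}{p^{\frac{n(n+1)}{2}}s^{n+1}}.
\]
Two elementary reductions then finish the argument: I invoke the Remark following \eqref{gammarec}, which records $\Gamma_{p,q}(n+1)=[n]_{p,q}!$, and I rewrite the exponent of $p$ using the binomial identity $\dfrac{n(n+1)}{2}=\binom{n+1}{2}$. Substituting both into the displayed expression yields exactly $\dfrac{[n]_{p,q}!}{p^{\binom{n+1}{2}}s^{n+1}}$.

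The only point requiring care is the hypothesis: Theorem \ref{Theo-laplace-power} is stated for $\alpha>-1$, so before specializing I should note that every nonnegative integer $n$ satisfies $n>-1$, so the specialization is legitimate and $\Gamma_{p,q}(n+1)$ is finite (in fact equal to $[n]_{p,q}!$). The constraint $s>0$ is inherited directly from \eqref{pqlaplace1} and from the statement of Theorem \ref{Theo-laplace-power}, so no extra work is needed there.

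Should a self-contained argument be preferred over the one-line specialization, I would instead run an induction on $n$. The base case $n=0$ is the computation $L_{p,q}\{1\}(s)=\tfrac{1}{s}$ already displayed after \eqref{pqlaplace1}, and the inductive step would use the scaling relation along with $(p,q)$-integration by parts \eqref{pq-int-part} applied to $t^{n}$ against $\Dpq E_{p,q}(-st)$, mirroring the $L_{p,q}\{t\}$ computation in the examples; the recursion $[n]_{p,q}!=[n]_{p,q}[n-1]_{p,q}!$ together with the bookkeeping of the powers of $p$ in the integration-by-parts shift would reproduce the factor $p^{\binom{n+1}{2}}$.

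There is essentially no genuine obstacle here: the result is a direct corollary. The one place where a careless reader could slip is the arithmetic matching $\frac{\alpha(\alpha+1)}{2}\big|_{\alpha=n}$ with $\binom{n+1}{2}$ and confirming that the general-$\alpha$ Gamma factor collapses to the $(p,q)$-factorial at integer argument; both are routine, so I expect the write-up to be just a few lines.
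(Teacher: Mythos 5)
Your primary route is correct but differs from the paper's written proof. The paper, despite announcing the theorem as ``a particular case of Theorem \ref{Theo-laplace-power} when $\alpha=n$,'' actually proves it independently by induction: the base case is $L_{p,q}\{1\}(s)=1/s$, and the inductive step uses the $(p,q)$-integration by parts \eqref{pq-int-part} to derive the recursion $L_{p,q}\{t^{n+1}\}(s)=\frac{[n+1]_{p,q}}{p^{n+1}s}L_{p,q}\{t^{n}\}(s)$, whence the factor $p^{\binom{n+1}{2}}$ accumulates. Your fallback induction sketch is essentially that proof. Your preferred one-line specialization is also legitimate and arguably cleaner, with one caveat: the identity $\Gamma_{p,q}(n+1)=[n]_{p,q}!$ that you quote from the Remark following \eqref{gammarec} is stated there for the product-form Gamma function \eqref{pqgam}, whereas Theorem \ref{Theo-laplace-power} is proved using the integral-form Gamma function \eqref{pq-gam-int2}; the paper never explicitly identifies the two. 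To close this you should note that the integral-form Gamma satisfies the same functional equation $\Gamma_{p,q}(z+1)=[z]_{p,q}\Gamma_{p,q}(z)$ (proved in the paper) together with $\Gamma_{p,q}(1)=\int_{0}^{\infty}\Epq(-qt)\,\dpq t=1$, which pins down $\Gamma_{p,q}(n+1)=[n]_{p,q}!$ for integer arguments. With that observation your corollary-style argument buys brevity and reuses the general power formula, while the paper's induction is self-contained and avoids any reliance on which Gamma function is meant.
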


\begin{proof}
We provide a proof by induction for this result. The result is obvious for $n=0$. Assume that it holds true for some nonnegative integer $n$, then using the $(p,q)$-integration by part \eqref{pq-int-part}, we have
\begin{eqnarray*}
    L_{p,q}\{t^{n+1}\}(s)&=& \int_{0}^{\infty}t^{n+1}E_{p,q}(-qst)\dpq t\\
    &=&-\dfrac{1}{p^{n+1}s}\int_{0}^{\infty}(pt)^{n+1}\Dpq E_{p,q}(-ts)\dpq t\\
    &=& -\dfrac{1}{p^{n+1}s}\left\{ \left[t^{n+1}E_{p,q}(-st)\right]_0^\infty- [n+1]_{p,q}\int_{0}^{\infty}t^{n}E_{p,q}(-qts)\dpq t\right\}\\
    &=&\dfrac{[n+1]_{p,q}}{p^{n+1}s}L_{p,q}\{t^n\}(s)\\
    &=& \dfrac{[n+1]_{p,q}}{p^{n+1}s}\dfrac{[n]_{p,q}!}{p^{\binom{n+1}{2}}s^{n+1}}\\
    &=&\dfrac{[n+1]_{p,q}!}{p^{\binom{n+2}{2}}s^{n+2}}.
\end{eqnarray*}
This proves the assertion.
\end{proof}

\noindent Next, we give explicit formulas for the transform of the $(p,q)$-exponential and the $(p,q)$-trigonometric functions. 

\begin{theorem}
Let $a$ be a real number, then
\begin{eqnarray}
   L_{p,q}\{e_{p,q}(at)\}(s)&=&\dfrac{p}{ps-a}, \quad s>\dfrac{a}{p},\label{exp-trans}\\
   L_{p,q}\{E_{p,q}(at)\}(s)&=& \dfrac{1}{s}\sum_{n=0}^{\infty}(-1)^n\left(\dfrac{q}{p}\right)^{\binom{n}{2}}\left(\dfrac{a}{ps}\right)^{n}.
\end{eqnarray}
\end{theorem}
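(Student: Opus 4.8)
The plan is to avoid evaluating the improper $(p,q)$-integral in \eqref{pqlaplace1} directly, and instead to reduce both transforms to the monomial transform \eqref{trans01}, $L_{p,q}\{t^n\}(s)=[n]_{p,q}!/(p^{\binom{n+1}{2}}s^{n+1})$, by expanding each $(p,q)$-exponential in its defining power series and transforming term by term. Granting for the moment that the sum and the integral may be swapped, linearity of $L_{p,q}$ then does all the work, and the whole argument reduces to a binomial-coefficient bookkeeping plus a geometric summation.

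For the first formula I would insert the series \eqref{pqexp}, writing $e_{p,q}(at)=\sum_{n\ge0}\frac{p^{\binom{n}{2}}}{[n]_{p,q}!}a^nt^n$, so that $L_{p,q}\{e_{p,q}(at)\}(s)=\sum_{n\ge0}\frac{p^{\binom{n}{2}}a^n}{[n]_{p,q}!}\cdot\frac{[n]_{p,q}!}{p^{\binom{n+1}{2}}s^{n+1}}$. The factorials cancel, and the decisive step is the Pascal identity $\binom{n+1}{2}-\binom{n}{2}=n$, which reduces the powers of $p$ to $p^{-n}$ and leaves the geometric series $\frac1s\sum_{n\ge0}\bigl(\tfrac{a}{ps}\bigr)^n$. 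This sums to $\frac{p}{ps-a}$ exactly on the stated range $s>a/p$, which is precisely the condition $|a/(ps)|<1$. For the second formula the identical computation, now starting from \eqref{bigpqexp} with $q^{\binom{n}{2}}$ in place of $p^{\binom{n}{2}}$, uses $p^{\binom{n+1}{2}}=p^{\binom{n}{2}}p^{n}$ to produce the weight $(q/p)^{\binom{n}{2}}p^{-n}$, so that $L_{p,q}\{E_{p,q}(at)\}(s)=\frac1s\sum_{n\ge0}(q/p)^{\binom{n}{2}}(a/(ps))^n$. Here the factor $(q/p)^{\binom{n}{2}}$ is genuinely quadratic in $n$, so the series is no longer geometric and cannot be collapsed to a rational function, which is exactly why the answer is recorded in series form. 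I note that this direct term-by-term computation yields the weight with a plus sign, matching the statement up to the factor $(-1)^n$, which would enter only if the argument were $-at$.

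The step I expect to be the main obstacle is justifying the term-by-term transform, i.e. exchanging the infinite sum with the improper $(p,q)$-integral \eqref{improper1}, which is itself a bilateral sum over $j\in\Z$. I would handle this by absolute convergence: after inserting \eqref{improper1} one gets a double series in $(n,j)$ whose general term factors, up to constants, into a $(q/p)$-type geometric part in $j$ and the convergent majorant of the exponential series in $n$; for $0<q<p$ and $s$ in the stated range both the sum over $j$ and the dominating sum over $n$ converge, so Tonelli for absolutely summable double series legitimizes the interchange. It is worth stressing that the same range $s>a/p$ which makes the final geometric series converge is what controls this dominating series, so no hypothesis beyond the stated one is needed.
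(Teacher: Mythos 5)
Your proposal follows essentially the same route as the paper: expand the $(p,q)$-exponential via \eqref{pqexp} (resp.\ \eqref{bigpqexp}), transform term by term using \eqref{trans01}, and sum the resulting series, with your justification of the sum--integral interchange being a supplement the paper simply omits. The sign discrepancy you flag in the second formula is real, and it is the paper that is at fault: in its own proof the expansion of $E_{p,q}(at)$ acquires a spurious factor $(-1)^n$ (which would be correct only for $E_{p,q}(-at)$) and this propagates into the stated result, so your version $\frac{1}{s}\sum_{n\ge 0}\left(\frac{q}{p}\right)^{\binom{n}{2}}\left(\frac{a}{ps}\right)^{n}$ is the one consistent with definition \eqref{bigpqexp}.
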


\begin{proof}
Using \eqref{pqexp}, \eqref{bigpqexp} and  \eqref{trans01}, we have
\begin{eqnarray*}
  L_{p,q}\{e_{p,q}(at)\}(s)&=& \int_{0}^{\infty}E_{p,q}(-qst)e_{p,q}(at)\dpq t\\
  &=& \sum_{n=0}^{\infty}\dfrac{a^np^{\binom{n}{2}}}{[n]_{p,q}!}\int_{0}^{\infty}E_{p,q}(-qst)t^n\dpq t\\
  &=&\sum_{n=0}^{\infty}\dfrac{a^np^{\binom{n}{2}}}{[n]_{p,q}!}\dfrac{[n]_{p,q}!}{p^{\binom{n+1}{2}}s^{n+1}}\\
  &=&\dfrac{1}{s}\sum_{n=0}^{\infty}\left(\dfrac{a}{ps}\right)^n=\dfrac{p}{ps-a}.
\end{eqnarray*}
\begin{eqnarray*}
 L_{p,q}\{E_{p,q}(at)\}(s)&=&\int_{0}^{\infty}E_{p,q}(-qst)E_{p,q}(at)\dpq t\\
  &=& \sum_{n=0}^{\infty}(-1)^n\dfrac{a^nq^{\binom{n}{2}}}{[n]_{p,q}!}\int_{0}^{\infty}E_{p,q}(-qst)t^n\dpq t\\
   &=&\sum_{n=0}^{\infty}(-1)^n\dfrac{a^nq^{\binom{n}{2}}}{[n]_{p,q}!}\dfrac{[n]_{p,q}!}{p^{\binom{n+1}{2}}s^{n+1}}\\
   &=&\dfrac{1}{s}\sum_{n=0}^{\infty}(-1)^n\left(\dfrac{q}{p}\right)^{\binom{n}{2}}\left(\dfrac{a}{ps}\right)^{n}.
\end{eqnarray*}
\end{proof}

\begin{theorem}
The following relations apply 
\begin{eqnarray*}
     L_{p,q}\{\cos_{p,q}(at)\}(s)&=& \dfrac{p^2s}{(ps)^2+a^2},\\
      L_{p,q}\{\sin_{p,q}(at)\}(s)&=& \dfrac{pa}{(ps)^2+a^2}.
\end{eqnarray*}
\end{theorem}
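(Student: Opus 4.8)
The plan is to expand $\pqcos(at)$ and $\pqsin(at)$ in their defining power series \eqref{pqcos1} and \eqref{pqsin1}, apply the transform term by term using linearity together with the monomial formula \eqref{trans01}, and then recognize each resulting series as a geometric series. First I would write
\[
\Lpq\{\pqcos(at)\}(s)=\sum_{n=0}^{\infty}\frac{(-1)^n a^{2n}p^{\binom{2n}{2}}}{[2n]_{p,q}!}\,\Lpq\{t^{2n}\}(s),
\]
and substitute $\Lpq\{t^{2n}\}(s)=[2n]_{p,q}!\,/\,\big(p^{\binom{2n+1}{2}}s^{2n+1}\big)$ from \eqref{trans01}. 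The $(p,q)$-factorials cancel, leaving only a power of $p$ to control, exactly as in the computation of $\Lpq\{e_{p,q}(at)\}$ that precedes this theorem.

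The key arithmetic step is the binomial identity $\binom{2n}{2}-\binom{2n+1}{2}=-2n$, which reduces the surviving $p$-factor to $p^{-2n}$. The cosine series then collapses to
\[
\Lpq\{\pqcos(at)\}(s)=\frac1s\sum_{n=0}^{\infty}(-1)^n\Big(\frac{a}{ps}\Big)^{2n}=\frac1s\cdot\frac{1}{1+a^2/(p^2s^2)},
\]
and clearing denominators yields $p^2s/((ps)^2+a^2)$. For the sine the same steps apply with the odd monomials: using $\binom{2n+1}{2}-\binom{2n+2}{2}=-(2n+1)$ the surviving factor becomes $p^{-(2n+1)}$, so the series is $\dfrac{a}{ps^2}\sum_{n=0}^{\infty}(-1)^n\big(\frac{a}{ps}\big)^{2n}$, which sums to $pa/((ps)^2+a^2)$.

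The main obstacle is not the algebra but the analytic justification of the term-by-term transform, i.e.\ interchanging the infinite sum with the improper $(p,q)$-integral \eqref{improper1} occurring in the definition \eqref{pqlaplace1}. I would justify this by dominated convergence on the discrete $(p,q)$-lattice, using that $E_{p,q}(-qst)$ supplies the required decay, in the same manner already used for $\Lpq\{e_{p,q}(at)\}$ and $\Lpq\{E_{p,q}(at)\}$. Finally, the geometric series converges precisely when $|a|<ps$, which is the implicit domain of validity of the stated closed forms; the rational expressions then furnish the analytic continuation to all admissible $s$.
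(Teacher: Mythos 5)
Your proposal is correct and follows essentially the same route as the paper: expand $\cos_{p,q}(at)$ and $\sin_{p,q}(at)$ via \eqref{pqcos1} and \eqref{pqsin1}, transform term by term using \eqref{trans01}, cancel the $(p,q)$-factorials, and sum the resulting geometric series in $(a/(ps))^2$. The extra remarks on justifying the interchange of sum and $(p,q)$-integral and on the domain $|a|<ps$ go beyond what the paper records but do not change the argument.
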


\begin{proof}
Using equations \eqref{pqcos1}, \eqref{pqsin1} and \eqref{pqlaplace1}, we have: 
\begin{eqnarray*}
  L_{p,q}\{\cos_{p,q}(at)\}(s)&=& \int_{0}^{\infty}E_{p,q}(-qst)\cos_{p,q}(at)\dpq t\\
  &=& \sum_{n=0}^{\infty}\dfrac{(-1)^na^{2n}p^{\binom{2n}{2}}}{[2n]_{p,q}!}\int_{0}^{\infty}E_{p,q}(-qst)t^{2n}\dpq t\\
  &=&\sum_{n=0}^{\infty}\dfrac{(-1)^na^{2n}p^{\binom{2n}{2}}}{[2n]_{p,q}!}\dfrac{[2n]_{p,q}!}{p^{\binom{2n+1}{2}}s^{2n+1}}\\
  &=&\dfrac{1}{s}\sum_{n=0}^{\infty}(-1)^n\left(\dfrac{a}{ps}\right)^{2n}=\dfrac{p^2s}{(ps)^2+a^2}.
\end{eqnarray*}
\begin{eqnarray*}
  L_{p,q}\{\sin_{p,q}(at)\}(s)&=& \int_{0}^{\infty}E_{p,q}(-qst)\sin_{p,q}(at)\dpq t\\
  &=& \sum_{n=0}^{\infty}\dfrac{(-1)^na^{2n+1}p^{\binom{2n+1}{2}}}{[2n+1]_{p,q}!}\int_{0}^{\infty}E_{p,q}(-qst)t^{2n+1}\dpq t\\
  &=&\sum_{n=0}^{\infty}\dfrac{(-1)^na^{2n+1}p^{\binom{2n+1}{2}}}{[2n+1]_{p,q}!}\dfrac{[2n+1]_{p,q}!}{p^{\binom{2n+2}{2}}s^{2n+2}}\\
  &=&\dfrac{1}{s}\sum_{n=0}^{\infty}(-1)^n\left(\dfrac{a}{ps}\right)^{2n+1}=\dfrac{pa}{(ps)^2+a^2}.
\end{eqnarray*}
\end{proof}

\begin{remark}
Note that one could also use \eqref{exp-trans}, \eqref{pqcos1} and \eqref{pqsin1} to obtain the result. 
\end{remark}

\begin{theorem}
The following equations apply
\begin{eqnarray*}
     L_{p,q}\{\cosh_{p,q}(at)\}(s)&=& \dfrac{p^2s}{(ps)^2-a^2},\quad  s>\left|\dfrac{a}{p}\right|\\
      L_{p,q}\{\sinh_{p,q}(at)\}(s)&=& \dfrac{pa}{(ps)^2-a^2},\quad s>\left|\dfrac{a}{p}\right|.
\end{eqnarray*}
\end{theorem}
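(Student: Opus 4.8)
The plan is to avoid any fresh integration by reducing both transforms to the already-established exponential formula \eqref{exp-trans}. Recall from \eqref{hyper-cos} and \eqref{hyper-sin} that $\cosh_{p,q}(at)$ and $\sinh_{p,q}(at)$ are, by definition, the even and odd combinations
\[
\cosh_{p,q}(at)=\frac{\epq(at)+\epq(-at)}{2},\qquad \sinh_{p,q}(at)=\frac{\epq(at)-\epq(-at)}{2}.
\]
Since $\Lpq$ is linear (by the linearity proposition), I would immediately write each transform as a half-sum or half-difference of $\Lpq\{\epq(at)\}$ and $\Lpq\{\epq(-at)\}$, each of which is given in closed form by \eqref{exp-trans}.

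Concretely, substituting $a$ and $-a$ into \eqref{exp-trans} gives $\Lpq\{\epq(at)\}(s)=\frac{p}{ps-a}$ and $\Lpq\{\epq(-at)\}(s)=\frac{p}{ps+a}$. For the hyperbolic cosine I then add and simplify,
\[
\Lpq\{\cosh_{p,q}(at)\}(s)=\frac{1}{2}\left(\frac{p}{ps-a}+\frac{p}{ps+a}\right)=\frac{p}{2}\cdot\frac{2ps}{(ps)^2-a^2}=\frac{p^2s}{(ps)^2-a^2},
\]
while for the hyperbolic sine I subtract,
\[
\Lpq\{\sinh_{p,q}(at)\}(s)=\frac{1}{2}\left(\frac{p}{ps-a}-\frac{p}{ps+a}\right)=\frac{p}{2}\cdot\frac{2a}{(ps)^2-a^2}=\frac{pa}{(ps)^2-a^2}.
\]
Both reductions are mechanical common-denominator manipulations, so no genuine difficulty arises in the algebra itself.

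The one point genuinely requiring care is the domain of validity. The formula \eqref{exp-trans} for $\Lpq\{\epq(at)\}$ holds for $s>a/p$; applying it to the argument $-a$ requires $s>-a/p$. Both half-sums are therefore justified only when $s$ exceeds both $a/p$ and $-a/p$, that is when $s>\left|\frac{a}{p}\right|$, which is exactly the stated hypothesis and the range in which the geometric-type series underlying \eqref{exp-trans} converges. I would flag this as the main (and essentially only) subtlety. As an alternative consistent with the treatment of the trigonometric case, one could instead expand \eqref{hyper-cos} and \eqref{hyper-sin} as power series, integrate term by term using \eqref{trans01}, and sum the resulting series $\frac{1}{s}\sum_{n\ge 0}\left(\frac{a}{ps}\right)^{2n}$ and $\frac{1}{s}\sum_{n\ge 0}\left(\frac{a}{ps}\right)^{2n+1}$; here the exponent bookkeeping $\binom{2n+1}{2}-\binom{2n}{2}=2n$ is what produces the clean powers of $\frac{a}{ps}$, and the same condition $s>\left|\frac{a}{p}\right|$ secures convergence.
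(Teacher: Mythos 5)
Your proposal is correct and matches the paper's own proof: the paper likewise writes $\cosh_{p,q}(at)$ and $\sinh_{p,q}(at)$ as half-sum and half-difference of $e_{p,q}(\pm at)$, applies linearity and \eqref{exp-trans}, and combines over a common denominator. Your additional remark on the domain $s>\left|\frac{a}{p}\right|$ is a welcome clarification the paper leaves implicit.
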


\begin{proof}
Using \eqref{pqlaplace1}, \eqref{hyper-cos} and \eqref{hyper-sin} we have
\begin{eqnarray*}
 L_{p,q}\{\cosh_{p,q}(at)\}(s)&=&\dfrac{1}{2} \left\{ L_{p,q}\{e_{p,q}(at)\}(s)+L_{p,q}\{e_{p,q}(-at)\}(s)\right\}\\
 &=&\dfrac{1}{2}\left(\dfrac{p}{ps-a}+\dfrac{p}{ps+a}\right)\\
 &=& \dfrac{p^2s}{(ps)^2-a^2},
\end{eqnarray*}
\begin{eqnarray*}
 L_{p,q}\{\sinh_{p,q}(at)\}(s)&=&\dfrac{1}{2} \left\{ L_{p,q}\{e_{p,q}(at)\}(s)-L_{p,q}\{e_{p,q}(-at)\}(s)\right\}\\
 &=&\dfrac{1}{2}\left(\dfrac{p}{ps-a}-\dfrac{p}{ps+a}\right)\\
 &=& \dfrac{pa}{(ps)^2-a^2}.
\end{eqnarray*}
\end{proof}

\noindent Next, $f$ being a function, we provide some properties related to the $(p,q)$-derivative of the $(p,q)$-Laplace transform of $f$ and the $(p,q)$-Laplace transform of the $(p,q)$-derivative of $f$. Let us introduce the following notation which makes clear the relative variable on which the $(p,q)$-derivative is applied: 
\[\partialpqone{s}f(x,s)=\dfrac{f(x,ps)-f(x,qs)}{(p-q)s},\] and \[ \partialpq{s}{n+1}=\partialpq{s}{n}\circ\partialpqone{s}, \quad n\geq 1,\quad\textrm{and}\quad\ \partialpq{s}{0} f=f. \]

\begin{theorem}[$(p,q)$-derivative of transforms]\label{theo-der-trans-1} For $n\in\N$, we have 
\begin{equation}\label{trans-tft1}
L_{p,q}\{t^nf(t)\}(s)=(-1)^nq^{\binom{n}{2}}\ \partialpq{s}{n}\left[F\left(q^{-n}s\right)\right].
\end{equation}
\end{theorem}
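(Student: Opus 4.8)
The plan is to work directly from the integral definition $F(s)=\int_{0}^{\infty}f(t)E_{p,q}(-qts)\,\dpq t$ and to exploit the closed form for the iterated $(p,q)$-derivative of the big exponential recorded in \eqref{nth-deriv-pqexp}. The purpose of the shift $s\mapsto q^{-n}s$ inside $F$ is precisely to convert the kernel $E_{p,q}(-qts)$ into one on which $\partialpq{s}{n}$ acts cleanly and reproduces the original kernel up to an explicit scalar, so that the surviving integral is exactly $L_{p,q}\{t^nf(t)\}(s)$.

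First I would substitute and write $F(q^{-n}s)=\int_{0}^{\infty}f(t)E_{p,q}(-tq^{1-n}s)\,\dpq t$, viewing the integrand as a function of $s$ with parameter $\lambda=-tq^{1-n}$. Applying $\partialpq{s}{n}$ under the integral sign and invoking \eqref{nth-deriv-pqexp} in the form $\partialpq{s}{n}E_{p,q}(\lambda s)=\lambda^{n}q^{\binom{n}{2}}E_{p,q}(\lambda q^{n}s)$ gives, with $\lambda=-tq^{1-n}$, the value $\partialpq{s}{n}E_{p,q}(-tq^{1-n}s)=\lambda^{n}q^{\binom{n}{2}}E_{p,q}(\lambda q^{n}s)$.

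Next I would simplify the two resulting $q$-powers. The argument collapses to $\lambda q^{n}s=-qts$, so the kernel returns to $E_{p,q}(-qts)$; and the scalar prefactor is $(-t)^{n}q^{n(1-n)}q^{\binom{n}{2}}=(-t)^{n}q^{-\binom{n}{2}}$, since $n(1-n)+\binom{n}{2}=-\binom{n}{2}$. Hence $\partialpq{s}{n}\big[F(q^{-n}s)\big]=(-1)^{n}q^{-\binom{n}{2}}\int_{0}^{\infty}t^{n}f(t)E_{p,q}(-qts)\,\dpq t=(-1)^{n}q^{-\binom{n}{2}}L_{p,q}\{t^{n}f(t)\}(s)$, and multiplying through by $(-1)^{n}q^{\binom{n}{2}}$ yields \eqref{trans-tft1}.

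The step requiring the most care is the interchange of the iterated $(p,q)$-derivative in $s$ with the improper $(p,q)$-integral in $t$: since $\int_{0}^{\infty}\cdot\,\dpq t$ is by \eqref{improper1} an infinite sum and $\partialpq{s}{n}$ is a finite difference in $s$, moving the operator inside amounts to differentiating the series term by term, which is legitimate on the range of $s$ where $F$ and its shifts converge. An alternative that avoids \eqref{nth-deriv-pqexp} is induction on $n$: the base case $n=1$ follows from $\partialpqone{s}E_{p,q}(-ts)=-tE_{p,q}(-qts)$, and the inductive step repeats the same $q$-power bookkeeping; but the direct computation above is shorter once \eqref{nth-deriv-pqexp} is at hand.
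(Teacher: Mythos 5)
Your proof is correct and takes essentially the same route as the paper's: substitute $s\mapsto q^{-n}s$ in the integral definition, apply $\partialpq{s}{n}$ under the $(p,q)$-integral, and use \eqref{nth-deriv-pqexp} on the kernel, with the same exponent bookkeeping $n(1-n)+\binom{n}{2}=-\binom{n}{2}$ collapsing the prefactor to $(-t)^nq^{-\binom{n}{2}}$ and the argument back to $E_{p,q}(-qts)$. Your explicit remark on justifying the interchange of the finite-difference operator with the improper $(p,q)$-integral is a detail the paper passes over silently.
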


\begin{proof}
The result is obvious for $n=0$. Let $n\geq 1$, we have  
\begin{eqnarray*}
\partialpq{s}{n}[F(q^{-n}s)]&=& \int_{0}^{\infty}\! \partialpq{s}{n}\left[ E_{p,q}(-q^{-n+1}st)\right]f(t)\dpq t
\end{eqnarray*}
Using equation \eqref{nth-deriv-pqexp}, it follows that 
\begin{eqnarray*}
    \partialpq{s}{n}\left[ E_{p,q}(-q^{-n+1}st)\right]&=&\prod_{j=0}^{n-1}\left(-q^{n-1-j}t\right)E_{p,q}(-qst)\\
    &=&(-1)^nq^{-\binom{n}{2}}t^nE_{p,q}(-qst).
\end{eqnarray*}
The proof is then completed.
\end{proof}

\noindent Note that \eqref{trans01} can be obtained using Theorem \ref{theo-der-trans-1}. Of course, taking $f(t)=1$ in \eqref{trans-tft1} and using \eqref{nth-deriv-invfunct}, we have $F(s)=\dfrac{1}{s}$ and
\[L_{p,q}\{t^n\}(s)=(-1)^nq^{\binom{n}{2}}\ \partialpq{s}{n}\left[\dfrac{q^n}{s}\right]=(-1)^nq^{\binom{n+1}{2}}\dfrac{(-1)^n[n]_{p,q}!}{(pq)^{\binom{n+1}{2}}s^{n+1}}=\dfrac{[n]_{p,q}!}{p^{\binom{n+1}{2}}s^{n+1}}.\]

\begin{corollary}
The following equation applies:
\begin{eqnarray*}
\Lpq\{t^n\epq(at)\}(s)&=&\dfrac{p^{n+1}q^{\binom{n+1}{2}}[n]_{p,q}!}{(p^{n+1}s-aq^n)(p^nqs-aq^n)\cdots(p^2q^{n-1}s-aq^n)(pq^ns-aq^n)}\\
&=&\dfrac{p^{n+1}q^{\binom{n+1}{2}}[n]_{p,q}!}{\prod\limits_{k=0}^{n}\left(p^{n+1-k}q^ks-aq^n\right)}.
\end{eqnarray*}
\end{corollary}

\begin{proof}
The proof follows from \eqref{gen-nth-deriv-invfunct} and \eqref{trans-tft1}. 
\end{proof}

\begin{theorem}[Transform of the $(p,q)$-derivative]
The following transform rule applies.
\begin{equation}\label{trans-pqder1}
\Lpq\left\{\Dpq^n f(t)\right\}(s)=\dfrac{s^n}{p^{\binom{n+1}{2}}}\Lpq\{f(t)\}\left(\frac{s}{p^n}\right)-\sum_{k=0}^{n-1}\dfrac{s^{n-1-k}}{p^{\binom{n-k}{2}}}(\Dpq^k f)(0). 
\end{equation}
\end{theorem}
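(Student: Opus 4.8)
The natural strategy is induction on $n$, with the case $n=1$ carrying all the analytic content and the general case following by bookkeeping. Specializing the claim to $n=1$ reduces \eqref{trans-pqder1} to the identity $\Lpq\{\Dpq f\}(s)=\frac{s}{p}\Lpq\{f\}(s/p)-f(0)$, so I would establish this first.

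To prove the base case I would apply the $(p,q)$-integration by parts formula \eqref{pq-int-part} to $\int_0^\infty (\Dpq f(t))\,E_{p,q}(-qst)\dpq t$, choosing the multiplier $\phi$ so that $\phi(pt)=E_{p,q}(-qst)$, i.e. $\phi(t)=E_{p,q}(-\tfrac{qs}{p}t)$. The boundary term $[\phi(t)f(t)]_0^\infty$ contributes exactly $-f(0)$, since $E_{p,q}(0)=1$ and $E_{p,q}$ decays at $+\infty$. The remaining integral involves the factor $f(qt)$ together with $\Dpq\phi(t)=-\tfrac{qs}{p}E_{p,q}(-\tfrac{q^2 s}{p}t)$, the last equality coming from the first-order rule $\Dpq E_{p,q}(\lambda t)=\lambda E_{p,q}(\lambda q t)$. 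The key step is to recognize $\int_0^\infty f(qt)E_{p,q}(-\tfrac{q^2 s}{p}t)\dpq t$ as a rescaling of $\Lpq\{f\}(s/p)$: applying the scaling identity of Proposition \ref{dil1} with $\alpha=q$ to $g(t)=f(t)E_{p,q}(-\tfrac{qs}{p}t)$ collapses it to $\tfrac1q\Lpq\{f\}(s/p)$, and the prefactor $\tfrac{qs}{p}$ then combines with $\tfrac1q$ to give precisely $\tfrac{s}{p}\Lpq\{f\}(s/p)$. This settles $n=1$.

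For the inductive step, assuming \eqref{trans-pqder1} for $n$, I would write $\Dpq^{n+1}f=\Dpq^{n}(\Dpq f)$ and apply the induction hypothesis to $g=\Dpq f$, using $(\Dpq^k g)(0)=(\Dpq^{k+1}f)(0)$; this produces $\tfrac{s^n}{p^{\binom{n+1}{2}}}\Lpq\{\Dpq f\}(s/p^n)$ minus the shifted sum. Substituting the $n=1$ formula for $\Lpq\{\Dpq f\}(s/p^n)$ and invoking the Pascal-type identity $\binom{n+1}{2}+(n+1)=\binom{n+2}{2}$ turns the leading term into $\tfrac{s^{n+1}}{p^{\binom{n+2}{2}}}\Lpq\{f\}(s/p^{n+1})$, as required. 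Finally, the reindexing $j=k+1$ merges the extra boundary contribution $\tfrac{s^n}{p^{\binom{n+1}{2}}}f(0)$ (playing the role of the $j=0$ term) with the shifted sum into the single sum $\sum_{k=0}^{n}\tfrac{s^{n-k}}{p^{\binom{n+1-k}{2}}}(\Dpq^k f)(0)$, which is exactly the statement for $n+1$.

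The main obstacle is the base case: one must reconcile the two different argument shifts that appear, namely the $f(px)$ and $g(qx)$ shifts forced by the $(p,q)$-integration by parts and the further $q$-shift introduced by the derivative rule for $E_{p,q}$, so that the surviving integral is genuinely $\Lpq\{f\}$ evaluated at $s/p$ and not at some spurious point; the scaling lemma of Proposition \ref{dil1} is what makes this reconciliation work. Once the base case is in hand, the inductive step is purely formal index manipulation.
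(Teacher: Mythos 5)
Your proposal is correct and follows essentially the same route as the paper: the base case $n=1$ via the $(p,q)$-integration by parts formula \eqref{pq-int-part} together with the scaling identity of Proposition \ref{dil1}, followed by induction using the Pascal identity $\binom{n+1}{2}+(n+1)=\binom{n+2}{2}$. The only (immaterial) differences are that you assign the roles in the integration-by-parts formula the other way around, producing a $q$-rescaling that cancels rather than the paper's direct $p$-rescaling of $f(pt)$, and that you peel the derivative as $\Dpq^{n}(\Dpq f)$ instead of $\Dpq(\Dpq^{n}f)$.
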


\begin{proof}
Let $f$ be a functions for which the $(p,q)$-Laplace transform exists. Then, for $n=1$, 
\begin{eqnarray*}
L_{p,q}\left\{\Dpq f(t)\right\}(s)&=& \int_{0}^{\infty}\Epq(-qst)\Dpq f(t)\dpq t\\
&=& \left[ f(t)\Epq(-st)\right]_{0}^{\infty}-\int_{0}^{\infty} f(pt)\Dpq \Epq(-st)\dpq t\\
&=&-f(0)+s\int_{0}^{\infty} f(pt)\Epq(-qst)\dpq t\\
&=&-f(0)+\frac{s}{p}\int_{0}^{\infty} f(t)\Epq(-q\frac{s}{p}t)\dpq t\\
&=&-f(0)+\dfrac{s}{p}L_{p,q}\{f(t)\}\left(\frac{s}{p}\right).
\end{eqnarray*}
Let $n\geq 1$, assume \eqref{trans-pqder1} holds true. Then, applying the result for $n=1$ with $\Dpq^n f(t)$, we have
\begin{eqnarray*}
\Lpq\left\{\Dpq^{n+1} f(t)\right\}(s)&=&-(\Dpq^n f)(0)+\dfrac{s}{p}L_{p,q}\{\Dpq^nf(t)\}\left(\frac{s}{p}\right)\\
&=& -(\Dpq^n f)(0)+\dfrac{s}{p}\left\{\dfrac{s^n}{p^{\binom{n+1}{2}+n}}\Lpq\{f(t)\}\left(\frac{s}{p^{n+1}}\right)\right.\\ &&\left.-\sum_{k=0}^{n-1}\dfrac{s^{n-1-k}}{p^{\binom{n-k}{2}+n-1-k}}(\Dpq^k f)(0)\right\}\\
&=& -(\Dpq^n f)(0)+\left\{\dfrac{s^{n+1}}{p^{\binom{n+1}{2}+n+1}}\Lpq\{f(t)\}\left(\frac{s}{p^{n+1}}\right)\right.\\ &&\left.-\sum_{k=0}^{n-1}\dfrac{s^{n-k}}{p^{\binom{n-k}{2}+n-k}}(\Dpq^k f)(0)\right\}\\
&=& -(\Dpq^n f)(0)+\left\{\dfrac{s^{n+1}}{p^{\binom{n+2}{2}}}\Lpq\{f(t)\}\left(\frac{s}{p^{n+1}}\right)\right.\\ &&\left.-\sum_{k=0}^{n-1}\dfrac{s^{n-k}}{p^{\binom{n-k+1}{2}}}(\Dpq^k f)(0)\right\}\\
&=&\dfrac{s^{n+1}}{p^{\binom{n+2}{2}}}\Lpq\{f(t)\}\left(\frac{s}{p^{n+1}}\right)-\sum_{k=0}^{n}\dfrac{s^{n-k}}{p^{\binom{n-k+1}{2}}}(\Dpq^k f)(0)
\end{eqnarray*}
This completes the proof.
\end{proof}

As a direct application, observe that taking $f(t)=t^n$ in \eqref{trans-pqder1}, we have 
\[\Lpq\{\Dpq^nt^n\}(s)=\dfrac{s^n}{p^{\binom{n+1}{2}}}\Lpq\{t^n\}\left(\dfrac{s}{p^n}\right).\]
Taking care that $\Dpq^n t^n=[n]_{p,q}!$, and $\Lpq\{1\}(s)=\dfrac{1}{s}$, it follows that 
\[\Lpq\{t^n\}\left(\dfrac{s}{p^n}\right)=p^{\binom{n+1}{2}}\dfrac{[n]_{p,q}!}{s^n}\Lpq\{1\}(1)=p^{\binom{n+1}{2}}\dfrac{[n]_{p,q}!}{s^{n+1}}.\]
Replacing $s$ by $sp^n$, we then have 
\[\Lpq\{t^n\}\left(s\right)=p^{\binom{n+1}{2}}\dfrac{[n]_{p,q}!}{s^{n+1}p^{n(n+1)}}=\dfrac{[n]_{p,q}!}{p^{\binom{n+1}{2}}s^{n+1}}.\]

\subsection{The $(p,q)$-Laplace transform of second kind}

\noindent Whereas in the previous sections we introduce the $(p,q)$-Laplace transform of the first kind and prove some of its important properties, in this section, we introduce the $(p,q)$-Laplace transform of the second kind. The main difference is at the level of the $(p,q)$-exponential used in the definition. The motivation of the next definition comes from the fact that when we transform the big $(p,q)$-exponential, the result remains in term of a series which we cannot simplify. \\

\noindent Let first introduce the $(p,q)$-Gamma function of the second king which will be useful.  

\begin{definition}
The $(p,q)$-Gamma function of the second kind is defined by
\begin{equation}
    \gamma_{p,q}(z)=q^{\frac{z(z-1)}{2}}\int_{0}^{\infty}t^{z-1}\epq(-pt)\dpq t, \;\; \Re(z)>0. 
\end{equation}
\end{definition}

\begin{proposition}
The $(p,q)$-Gamma function fulfils the following fundemental relation
\begin{equation}
\gamma_{p,q}(z+1)=[z]_{p,q}\gamma_{p,q}(z),
\end{equation}
moreover, for any non negative integer $n>0$, the following relation holds
\begin{equation}
\gamma_{p.q}(n+1)=[n]_{p,q}!.
\end{equation}
\end{proposition}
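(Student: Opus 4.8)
The plan is to mimic exactly the proof already given for the first-kind $(p,q)$-Gamma function, since $\gamma_{p,q}$ is defined by the same integral recipe with the roles of $p$ and $q$ interchanged and $E_{p,q}$ replaced by $e_{p,q}$. First I would establish the recurrence $\gamma_{p,q}(z+1)=[z]_{p,q}\gamma_{p,q}(z)$ by starting from the defining integral for $\gamma_{p,q}(z+1)$, namely $\gamma_{p,q}(z+1)=q^{\frac{z(z+1)}{2}}\int_0^\infty t^z \epq(-pt)\dpq t$, and then applying the $(p,q)$-integration by parts formula \eqref{pq-int-part}. The crucial input is the derivative rule $\Dpq \epq(\lambda x)=\lambda\,\epq(\lambda p x)$ from the proposition preceding \eqref{nth-deriv-pqExp}: taking $\lambda=-1$ gives $\Dpq \epq(-x)=-\epq(-px)$, so that $-\Dpq \epq(-t)=\epq(-pt)$, which is precisely the factor appearing in the integrand.

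The key computational steps, in order, run parallel to the first-kind proof. I would write
\begin{eqnarray*}
\gamma_{p,q}(z+1)&=& q^{\frac{z(z+1)}{2}}\int_{0}^{\infty}t^{z}\epq(-pt)\dpq t\\
&=& -q^{\frac{z(z-1)}{2}}\int_{0}^{\infty}(qt)^{z}\Dpq \epq(-t)\dpq t\\
&=& -q^{\frac{z(z-1)}{2}}\left[ t^{z}\epq(-t)\right]_{0}^{\infty}+q^{\frac{z(z-1)}{2}}[z]_{p,q}\int_{0}^{\infty}t^{z-1}\epq(-pt)\dpq t\\
&=&[z]_{p,q}\gamma_{p,q}(z).
\end{eqnarray*}
Here the factoring $q^{\frac{z(z+1)}{2}}=q^{\frac{z(z-1)}{2}}q^{z}$ lets me absorb $q^{z}t^z=(qt)^z$, and the application of \eqref{pq-int-part} with $f(qt)=(qt)^z$ (so that $f(pt)$-type terms and the $\Dpq$ of the power produce the $[z]_{p,q}$ factor after rescaling the variable via Proposition \ref{dil1}) yields the remaining integral, which is exactly $\gamma_{p,q}(z)$ up to the $q$-power prefactor. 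The boundary term $\left[t^{z}\epq(-t)\right]_0^\infty$ vanishes for $\Re(z)>0$ because $\epq(-t)\to 0$ at infinity (its reciprocal $E_{p,q}(t)$ grows, by \eqref{invpqexpo}) and $t^z\to 0$ at the origin.

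The second assertion, $\gamma_{p,q}(n+1)=[n]_{p,q}!$, then follows by a routine induction on $n$: the base case $\gamma_{p,q}(1)=\int_0^\infty \epq(-pt)\dpq t = 1$ is computed directly (exactly as in the $\Lpq\{1\}(s)$ example, using $-\Dpq \epq(-t)=\epq(-pt)$ and the fundamental theorem), and the inductive step is immediate from the recurrence together with $[n+1]_{p,q}[n]_{p,q}!=[n+1]_{p,q}!$. The main obstacle I anticipate is purely bookkeeping: getting the two prefactor exponents to match, i.e. verifying that $\frac{z(z+1)}{2}-z=\frac{z(z-1)}{2}$ so the $q$-powers line up, and handling the change of variable in the integration-by-parts term correctly (the $\epq$ derivative rule dilates by $p$, not $q$, so one must track the argument of $\epq$ carefully and invoke Proposition \ref{dil1} to restore the integral to the canonical form defining $\gamma_{p,q}(z)$). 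Justifying the vanishing of the boundary term rigorously under the stated hypothesis $\Re(z)>0$ is the only genuinely analytic point.
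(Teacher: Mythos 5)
Your proposal is correct and follows essentially the same route as the paper: the identical integration-by-parts computation, splitting $q^{\frac{z(z+1)}{2}}=q^{\frac{z(z-1)}{2}}q^z$ and using $\Dpq \epq(-t)=-\epq(-pt)$, with the factorial identity then following by induction. You in fact supply slightly more detail than the paper (the vanishing of the boundary term and the base case $\gamma_{p,q}(1)=1$), which the paper leaves implicit.
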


\begin{proof}
Let $z$ be a complex number such that $\Re(z)>0$, then we have 
\begin{eqnarray*}
   \gamma_{p,q}(z+1)&=& q^{\frac{z(z+1)}{2}}\int_{0}^{\infty}t^z\epq(-pt)\dpq t\\
   &=& -q^{\frac{z(z-1)}{2}}\int_{0}^{\infty}(qt)^z\Dpq\epq(-t)\dpq t\\
   &=&-q^{\frac{z(z-1)}{2}}\left\{ \left[t^z\epq(-t)\right]_0^\infty-[z]_{p,q}\int_{0}^{\infty}t^{z-1}\epq(-pt)\dpq t\right\}\\
   &=& [z]_{p,q}\gamma_{p,q}(z). 
\end{eqnarray*}
\end{proof}

\begin{definition}
For a given function $f(t)$, we define its $(p,q)$-Laplace transform of the first kind as the function 
\begin{equation}\label{pqlaplace2}
F(s)=\lpq\{f(t)\}(s)=\int_{0}^{\infty}f(t)e_{p,q}(-pts)\dpq t, \;\; s>0.
\end{equation}
\end{definition}

\begin{proposition}[Linearity]
By \eqref{pqlaplace2}, we have 
\begin{equation*}
\lpq\{\alpha f(t)+\beta g(t)\}=\alpha \lpq\{f(t)\}+\beta \lpq\{g(t)\}. 
\end{equation*}
\end{proposition}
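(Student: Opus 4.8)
The plan is to argue directly from the defining integral \eqref{pqlaplace2}, reducing everything to the linearity of the improper $(p,q)$-integral. First I would substitute the combination $\alpha f(t)+\beta g(t)$ into the definition, obtaining
\[
\lpq\{\alpha f(t)+\beta g(t)\}(s)=\int_{0}^{\infty}\bigl(\alpha f(t)+\beta g(t)\bigr)\epq(-pts)\dpq t.
\]
Distributing the kernel $\epq(-pts)$ across the sum inside the integrand gives the pointwise identity $\bigl(\alpha f(t)+\beta g(t)\bigr)\epq(-pts)=\alpha\,f(t)\epq(-pts)+\beta\,g(t)\epq(-pts)$, so the claim will follow once the integral of this sum is shown to split.

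The single substantive point is therefore that the $(p,q)$-integral operator is linear, i.e. respects finite linear combinations of integrands. This is immediate from the series representation \eqref{improper1}: the improper $(p,q)$-integral is, by definition, an infinite sum of the sampled values of the integrand weighted by $(p-q)q^j/p^{j+1}$, and both scalar multiplication and termwise addition of absolutely convergent series preserve their sums. Applying this to the integrand above lets me pull the constants $\alpha$ and $\beta$ outside and separate the single integral into two, each of which is by \eqref{pqlaplace2} exactly $\lpq\{f(t)\}(s)$ and $\lpq\{g(t)\}(s)$ respectively.

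There is no real obstacle here beyond bookkeeping; the only caveat worth flagging is convergence, since the rearrangement of the defining series into two separate series is legitimate precisely when the individual transforms $\lpq\{f(t)\}(s)$ and $\lpq\{g(t)\}(s)$ exist. Under that standing hypothesis each sampled series converges, the split is valid, and the identity
\[
\lpq\{\alpha f(t)+\beta g(t)\}=\alpha\,\lpq\{f(t)\}+\beta\,\lpq\{g(t)\}
\]
follows at once. I expect the proof to be a one-line appeal to \eqref{pqlaplace2} together with the linearity of \eqref{improper1}, exactly mirroring the corresponding proposition already established for the transform of the first kind.
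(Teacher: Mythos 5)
Your proposal is correct and follows exactly the route the paper intends: the paper's own justification is simply an appeal to the definition \eqref{pqlaplace2} together with the linearity of the $(p,q)$-integral, which you have merely spelled out in more detail (including the harmless convergence caveat).
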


\begin{proposition}
For any real number $\alpha>-1$, we have 
\begin{equation}
   \lpq\{t^\alpha\}(s)=\dfrac{\gamma_{p,q}(\alpha+1)}{q^{\frac{\alpha(\alpha-1)}{2}}s^{\alpha+1}}.
\end{equation}
\end{proposition}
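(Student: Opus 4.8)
The plan is to transcribe, \emph{mutatis mutandis}, the proof of Theorem~\ref{Theo-laplace-power}, exchanging the big $(p,q)$-exponential $\Epq$ for the small one $\epq$ and the first-kind Gamma function for the second-kind Gamma function $\gamma_{p,q}$. The second-kind transform \eqref{pqlaplace2} is built from $\epq(-pts)$, and the integral representation defining $\gamma_{p,q}$ is built from $\epq(-pt)$, so every ingredient of the earlier argument has an exact counterpart here; consequently the proof should be essentially routine.

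Starting from the definition I would write
\[
\lpq\{t^{\alpha}\}(s)=\int_{0}^{\infty}t^{\alpha}\,\epq(-pts)\,\dpq t
\]
and then remove the parameter $s$ from the argument of $\epq$ by a dilation. Writing the integrand as $s^{-\alpha}$ times $g(st)$, where $g(v)=v^{\alpha}\epq(-pv)$, the scaling identity \eqref{changevar} of Proposition~\ref{dil1} (applied with dilation factor $s$, not to be confused with the exponent $\alpha$) yields
\[
\lpq\{t^{\alpha}\}(s)=\frac{1}{s^{\alpha+1}}\int_{0}^{\infty}v^{\alpha}\,\epq(-pv)\,\dpq v .
\]

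The remaining integral is, by definition, the integral representation of $\gamma_{p,q}(\alpha+1)$: taking $z=\alpha+1$ in $\gamma_{p,q}(z)=q^{z(z-1)/2}\int_{0}^{\infty}t^{z-1}\epq(-pt)\,\dpq t$ gives $\int_{0}^{\infty}v^{\alpha}\epq(-pv)\,\dpq v=q^{-\alpha(\alpha+1)/2}\,\gamma_{p,q}(\alpha+1)$, whence
\[
\lpq\{t^{\alpha}\}(s)=\frac{\gamma_{p,q}(\alpha+1)}{q^{\alpha(\alpha+1)/2}\,s^{\alpha+1}} .
\]
I note that this computation produces the exponent $\tfrac{\alpha(\alpha+1)}{2}$, in perfect parallel with the first-kind formula of Theorem~\ref{Theo-laplace-power}; the exponent $\tfrac{\alpha(\alpha-1)}{2}$ printed in the present statement therefore appears to be a typographical slip (of the same flavour as the one flagged after \eqref{pq-gamma-integral}). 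A quick sanity check at $\alpha=1$, where a direct integration by parts gives $\lpq\{t\}(s)=1/(qs^{2})$, confirms the exponent $\tfrac{\alpha(\alpha+1)}{2}$.

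Since Proposition~\ref{dil1} is available, there is no serious obstacle; the one point deserving care is the dilation step, namely that the scaling identity may legitimately be applied to the improper integral on $[0,\infty)$ carrying the non-integer weight $t^{\alpha}$, and that this integral converges. This is precisely where the hypothesis $\alpha>-1$ (and the assumed existence of $\gamma_{p,q}(\alpha+1)$) enters; everything else is an immediate transcription of the first-kind argument.
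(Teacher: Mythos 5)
Your argument is the same as the paper's: pull the factor $s$ out of the integral via the dilation \eqref{changevar} and recognize what remains as the integral defining $\gamma_{p,q}(\alpha+1)$. Your correction of the exponent is also right: the definition of $\gamma_{p,q}$ forces $q^{\alpha(\alpha+1)/2}$ in the denominator, which is what makes the formula consistent with the integer case \eqref{lpqtrans-power} (where the exponent is $\binom{n+1}{2}=\tfrac{n(n+1)}{2}$) and with the first-kind Theorem~\ref{Theo-laplace-power}, so the $q^{\alpha(\alpha-1)/2}$ printed in the statement (and reproduced without justification in the last line of the paper's proof) is indeed a typographical slip.
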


\begin{proof}
By definition, one has 
\begin{eqnarray*}
   \lpq\{t^\alpha\}(s)&=& \int_{0}^{\infty} t^{\alpha}e_{p,q}(-pts)\dpq t\\
   &=& \dfrac{1}{s^{\alpha+1}}\int_{0}^{\infty}t^{\alpha}\epq(-pt)\dpq t\\
   &=& \dfrac{\gamma_{p,q}(\alpha+1)}{q^{\frac{\alpha(\alpha-1)}{2}}s^{\alpha+1}}.
\end{eqnarray*}
\end{proof}

\begin{proposition}
For $n\in\N$, we have have 
\begin{equation}\label{lpqtrans-power}
   \lpq\{t^n\}(s)=\dfrac{[n]_{p,q}!}{q^{\binom{n+1}{2}}s^{n+1}}.
\end{equation}
\end{proposition}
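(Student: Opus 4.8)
The plan is to derive the special case $\alpha = n$ directly from the Proposition immediately preceding it, which states that for any real $\alpha > -1$,
\[
\lpq\{t^\alpha\}(s)=\dfrac{\gamma_{p,q}(\alpha+1)}{q^{\frac{\alpha(\alpha-1)}{2}}s^{\alpha+1}}.
\]
Since a nonnegative integer $n$ certainly satisfies $n > -1$, I would simply substitute $\alpha = n$ and then use the evaluation of the second-kind Gamma function at integer arguments, namely $\gamma_{p,q}(n+1)=[n]_{p,q}!$, established in the earlier Proposition on $\gamma_{p,q}$.

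First I would write down the specialized formula
\[
\lpq\{t^n\}(s)=\dfrac{\gamma_{p,q}(n+1)}{q^{\frac{n(n-1)}{2}}s^{n+1}},
\]
and then replace $\gamma_{p,q}(n+1)$ by $[n]_{p,q}!$. The only remaining point is to check that the exponent of $q$ matches the claimed $\binom{n+1}{2}$. Here the exponent produced by the general formula is $\tfrac{n(n-1)}{2}=\binom{n}{2}$, not $\binom{n+1}{2}=\tfrac{n(n+1)}{2}$. So the substitution would yield $\lpq\{t^n\}(s)=\dfrac{[n]_{p,q}!}{q^{\binom{n}{2}}s^{n+1}}$, which differs from \eqref{lpqtrans-power} by a factor of $q^{n}$.

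The step I expect to be the genuine obstacle is therefore reconciling this discrepancy in the power of $q$. There are two possibilities: either the general $\alpha$-formula should carry exponent $\tfrac{\alpha(\alpha+1)}{2}$ (mirroring the first-kind result in Theorem \ref{Theo-laplace-power}, where the prefactor is $p^{\frac{\alpha(\alpha+1)}{2}}$), in which case the integer case follows cleanly with $\binom{n+1}{2}$; or the integer statement contains a typo and the true exponent is $\binom{n}{2}$. Because of the structural symmetry with the first-kind transform $\Lpq$, where $p$ and $q$ simply exchange roles, I strongly suspect the intended exponent in both the real-$\alpha$ and integer cases is $\binom{n+1}{2}=\tfrac{n(n+1)}{2}$, and that the $\tfrac{\alpha(\alpha-1)}{2}$ in the preceding Proposition is the misprint.

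Rather than rely on the potentially mistyped intermediate result, I would give a self-contained induction in the style of the first-kind proof of \eqref{trans01}. The base case $n=0$ gives $\lpq\{1\}(s)=1/s$. For the inductive step I would apply the $(p,q)$-integration by parts \eqref{pq-int-part} to
\[
\lpq\{t^{n+1}\}(s)=\int_{0}^{\infty}t^{n+1}e_{p,q}(-pst)\dpq t,
\]
using that $\Dpq e_{p,q}(-st)=-s\,e_{p,q}(-pst)$ so that $t^{n+1}e_{p,q}(-pst)=-\tfrac{1}{q^{n+1}s}(qt)^{n+1}\Dpq e_{p,q}(-st)$; the boundary term vanishes and the surviving integral reproduces $\lpq\{t^n\}(s)$ with coefficient $\tfrac{[n+1]_{p,q}}{q^{n+1}s}$, whence $\binom{n}{2}+ (n+1)=\binom{n+2}{2}$ carries the induction through and confirms the exponent $\binom{n+1}{2}$ in \eqref{lpqtrans-power}. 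This parallels exactly the first-kind computation with $p$ and $q$ interchanged, which is the cleanest way to settle the exponent unambiguously.
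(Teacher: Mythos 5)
Your final argument---the induction on $n$ via $(p,q)$-integration by parts, reducing $\lpq\{t^{n+1}\}(s)$ to $\tfrac{[n+1]_{p,q}}{q^{n+1}s}\lpq\{t^{n}\}(s)$ with base case $\lpq\{1\}(s)=1/s$---is exactly the paper's own proof. Your side observation is also right: the exponent $q^{\frac{\alpha(\alpha-1)}{2}}$ in the preceding proposition is a misprint for $q^{\frac{\alpha(\alpha+1)}{2}}$ (since $\gamma_{p,q}(\alpha+1)$ carries the prefactor $q^{\frac{(\alpha+1)\alpha}{2}}$), so the integer formula with $\binom{n+1}{2}$ is the correct one.
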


\begin{proof}
Clearly, we have for $n=0$
\[\lpq\{1\}(s)=\int_{0}^{\infty}e_{p,q}(-pts)\dpq t=-\dfrac{1}{s}\left[e_{p,q}(ts)\right]_0^{\infty}=\dfrac{1}{s}.\]
Next, for $n>0$, we have 
\begin{eqnarray*}
\lpq\{t^n\}(s)&=&\int_0^{\infty}t^ne_{p,q}(-pts)\dpq t\\
&=& -\dfrac{1}{q^ns}\int_{0}^{\infty}(qt)^n\Dpq e_{p,q}(-ts)\dpq t\\
&=& -\dfrac{1}{q^ns}\left\{  \left[ t^ne_{p,q}(-ts)\right]_{0}^{\infty}-[n]_{p,q}\int_{0}^{\infty} t^{n-1}e_{p,q}(-pts)\dpq t\right\}\\
&=& \dfrac{[n]_{p,q}}{q^n s}\lpq\{t^{n-1}\}(s).
\end{eqnarray*}
The proof then follows by induction. 
\end{proof}

\begin{proposition}
The following equation holds
\begin{eqnarray}
    \lpq \{E_{p,q}(at)\}(s) &=& \dfrac{q}{qs-a},\quad s>\left|\dfrac{a}{q}\right|. \label{TransformExp2}
    %\lpq \{e_{p,q}(at)\}(s) &=&  
\end{eqnarray}
\end{proposition}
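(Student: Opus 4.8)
The plan is to compute the transform directly from the definition \eqref{pqlaplace2}, expanding the big $(p,q)$-exponential $E_{p,q}(at)$ as a power series and then integrating term by term using the already-established formula \eqref{lpqtrans-power} for $\lpq\{t^n\}(s)$. This mirrors exactly the strategy used for the first-kind transform of $e_{p,q}(at)$ in the proof of \eqref{exp-trans}, so I expect the structure to be completely parallel.

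First I would write
\begin{eqnarray*}
\lpq\{E_{p,q}(at)\}(s)&=&\int_{0}^{\infty}e_{p,q}(-pts)E_{p,q}(at)\dpq t\\
&=&\sum_{n=0}^{\infty}\dfrac{a^nq^{\binom{n}{2}}}{[n]_{p,q}!}\int_{0}^{\infty}e_{p,q}(-pts)t^n\dpq t,
\end{eqnarray*}
using the series \eqref{bigpqexp} for $E_{p,q}(at)$, namely $E_{p,q}(at)=\sum_{n}\frac{q^{\binom{n}{2}}}{[n]_{p,q}!}(at)^n$. The inner integral is precisely $\lpq\{t^n\}(s)$, which by \eqref{lpqtrans-power} equals $\frac{[n]_{p,q}!}{q^{\binom{n+1}{2}}s^{n+1}}$.

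Next I would substitute this value and watch the factorials cancel. Since $\binom{n+1}{2}-\binom{n}{2}=n$, the powers of $q$ combine as $q^{\binom{n}{2}}/q^{\binom{n+1}{2}}=q^{-n}$, giving
\begin{eqnarray*}
\lpq\{E_{p,q}(at)\}(s)&=&\sum_{n=0}^{\infty}\dfrac{a^nq^{\binom{n}{2}}}{[n]_{p,q}!}\dfrac{[n]_{p,q}!}{q^{\binom{n+1}{2}}s^{n+1}}\\
&=&\dfrac{1}{s}\sum_{n=0}^{\infty}\left(\dfrac{a}{qs}\right)^n.
\end{eqnarray*}
Summing the geometric series $\sum_n (a/qs)^n=\frac{1}{1-a/(qs)}=\frac{qs}{qs-a}$, which converges exactly when $|a/(qs)|<1$, i.e.\ $s>|a/q|$, yields $\frac{1}{s}\cdot\frac{qs}{qs-a}=\frac{q}{qs-a}$, as claimed.

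The only genuine subtlety, and the step I would flag as the main obstacle, is justifying the interchange of the summation and the $(p,q)$-integral: because $\int_0^\infty$ here is itself an infinite sum over the $(p,q)$-lattice \eqref{improper1}, swapping the two series requires absolute convergence on the convergence domain $s>|a/q|$. In the spirit of the analogous proofs earlier in the paper (e.g.\ \eqref{exp-trans}), I would simply assert that the interchange is valid in the region of convergence rather than supply the full Fubini-type estimate, so the written proof reduces to the three display lines above.
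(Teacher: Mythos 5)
Your proposal is correct and follows essentially the same route as the paper's own proof: expand $E_{p,q}(at)$ via \eqref{bigpqexp}, apply \eqref{lpqtrans-power} term by term, and sum the resulting geometric series in $a/(qs)$. The paper likewise performs the series--integral interchange without further justification, so your additional remark on convergence is a small bonus rather than a divergence in method.
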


\begin{proof}
We have 
\begin{eqnarray*}
\lpq\{E_{p,q}(at)\}(s)&=& \sum_{n=0}^{\infty}\dfrac{q^{\binom{n}{2}}a^n}{[n]_{p,q}!}\int_{0}^{\infty}t^ne_{p,q}(-pts)\dpq t\\
&=& \sum_{n=0}^{\infty}\dfrac{q^{\binom{n}{2}}a^n}{[n]_{p,q}!}\times \dfrac{[n]_{p,q}!}{q^{\binom{n+1}{2}}s^{n+1}}\\
&=& \dfrac{1}{s}\sum_{n=0}^{\infty}\left(\dfrac{a}{qs}\right)^n=\dfrac{q}{qs-a}.
\end{eqnarray*}
\end{proof}

\begin{corollary}\label{corCos}
The following equations hold
\begin{eqnarray*}
    \lpq\{Cos_{p,q}(at)\}(s)&=& \dfrac{q^2s}{(qs)^2+a^2},\quad s>\left|\dfrac{a}{q}\right|,\\
    \lpq\{Sin_{p,q}(at)\}(s)&=& \dfrac{qa}{(qs)^2+a^2},\quad s>\left|\dfrac{a}{q}\right|.
\end{eqnarray*}
\end{corollary}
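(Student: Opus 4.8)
The plan is to obtain both transforms simultaneously from the transform of the big $(p,q)$-exponential \eqref{TransformExp2} by allowing a purely imaginary argument. From the expansion \eqref{pq-cs1} of $E_{p,q}(iz)$ together with the definitions \eqref{pqcosBig} and \eqref{pqsinBig}, one reads off the identity $E_{p,q}(iat)=\pqCos(at)+i\,\pqSin(at)$. Applying $\lpq$ and using its linearity, the single evaluation of $\lpq\{E_{p,q}(iat)\}(s)$ then splits into the two desired transforms as its real and imaginary parts.

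Concretely, I would substitute $a\mapsto ia$ in \eqref{TransformExp2}, which gives $\lpq\{E_{p,q}(iat)\}(s)=\dfrac{q}{qs-ia}$. Rationalizing by multiplying numerator and denominator by $qs+ia$ produces
\[\lpq\{E_{p,q}(iat)\}(s)=\dfrac{q(qs+ia)}{(qs)^2+a^2}=\dfrac{q^2s}{(qs)^2+a^2}+i\,\dfrac{qa}{(qs)^2+a^2}.\]
Matching real and imaginary parts against $\lpq\{\pqCos(at)\}(s)+i\,\lpq\{\pqSin(at)\}(s)$ yields the two formulas at once, with the convergence condition $s>\left|a/q\right|$ inherited directly from the domain of validity of \eqref{TransformExp2}.

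Alternatively, one can argue termwise: expand $\pqCos(at)$ and $\pqSin(at)$ by \eqref{pqcosBig} and \eqref{pqsinBig}, transform each monomial with \eqref{lpqtrans-power}, and sum the resulting geometric series. Here the $(p,q)$-factorials cancel and the powers of $q$ combine through the identities $\binom{2n+1}{2}-\binom{2n}{2}=2n$ and $\binom{2n+2}{2}-\binom{2n+1}{2}=2n+1$, leaving $\frac{1}{s}\sum_{n}(-1)^n\left(a/(qs)\right)^{2n}$ and $\frac{1}{s}\sum_{n}(-1)^n\left(a/(qs)\right)^{2n+1}$ respectively, each a convergent geometric series for $s>\left|a/q\right|$ that sums to the stated closed forms.

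The main point requiring care in either route is the justification for interchanging the infinite sum with the improper $(p,q)$-integral \eqref{improper1} (equivalently, the legitimacy of transforming $E_{p,q}$ at the complex argument $ia$), which is exactly the absolute convergence guaranteed by the hypothesis $s>\left|a/q\right|$. Once this interchange is granted, both computations are entirely routine.
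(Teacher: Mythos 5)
Your proposal is correct and follows essentially the same route as the paper, whose proof is the one-line remark that the result follows from the definitions \eqref{pqcosBig}, \eqref{pqsinBig} and equation \eqref{TransformExp2} --- i.e.\ precisely your decomposition $E_{p,q}(iat)=\Cos_{p,q}(at)+i\,\Sin_{p,q}(at)$ combined with the closed form $\dfrac{q}{qs-ia}$. Your termwise alternative also matches how the paper handles the analogous first-kind transforms of $\cos_{p,q}$ and $\sin_{p,q}$, so both variants are consistent with the text.
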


\begin{proof}
The proof follows from the definitions \eqref{pqcosBig}, \eqref{pqsinBig} and equation \eqref{TransformExp2}. 
\end{proof}

\begin{corollary}
The following equations hold
\begin{eqnarray*}
    \lpq\{Cosh_{p,q}(at)\}(s)&=& \dfrac{q^2s}{(qs)^2-a^2},\quad s>\left|\dfrac{a}{q}\right|,\\
    \lpq\{Sinh_{p,q}(as)\}(s)&=& \dfrac{qa}{(qs)^2-a^2},\quad  s>\left|\dfrac{a}{q}\right|.
\end{eqnarray*}
\end{corollary}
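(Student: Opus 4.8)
The plan is to reduce both transforms to the already-computed transform of the big $(p,q)$-exponential $E_{p,q}$ recorded in \eqref{TransformExp2}, exactly mirroring the treatment of $\cosh_{p,q}$ and $\sinh_{p,q}$ under the first-kind transform. The starting observation is that the definitions \eqref{hyper-big-cos} and \eqref{hyper-big-sin} express $Cosh_{p,q}(at)$ and $Sinh_{p,q}(at)$ as the half-sum and half-difference of $E_{p,q}(at)$ and $E_{p,q}(-at)$. Since the $(p,q)$-Laplace transform of the second kind is linear, I would immediately write
\[
\lpq\{Cosh_{p,q}(at)\}(s)=\tfrac12\bigl(\lpq\{E_{p,q}(at)\}(s)+\lpq\{E_{p,q}(-at)\}(s)\bigr),
\]
and the analogous identity with a minus sign for $Sinh_{p,q}$.

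Next I would substitute the value from \eqref{TransformExp2}, applying it once with parameter $a$ and once with parameter $-a$, which yields the two elementary fractions $\dfrac{q}{qs-a}$ and $\dfrac{q}{qs+a}$. Placing these over the common denominator $(qs-a)(qs+a)=(qs)^2-a^2$ gives $\dfrac{q^2s}{(qs)^2-a^2}$ in the cosine case (the numerators adding to $2qs$) and $\dfrac{qa}{(qs)^2-a^2}$ in the sine case (the numerators subtracting to $2a$), after cancelling the factor $\tfrac12$. This is precisely the asserted result, so no induction or further $(p,q)$-integration by parts is required; all the analytic work was already carried out in establishing \eqref{TransformExp2}.

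The one point deserving attention — though it is not really an obstacle — is the domain of validity. Formula \eqref{TransformExp2} holds for $s>|a/q|$, and applying it with $-a$ demands $s>|a/q|$ as well, the very same restriction. Hence both partial transforms converge on the common region $s>|a/q|$, and this condition is exactly what legitimises the geometric-series summation hidden inside \eqref{TransformExp2}. Consequently the stated hyperbolic formulas are valid precisely on $s>|a/q|$, as claimed.
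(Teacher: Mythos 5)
Your proof is correct and is essentially the paper's argument: the paper's own proof is a one-line pointer to the analogous corollary for $Cos_{p,q}$ and $Sin_{p,q}$ together with \eqref{TransformExp2}, and your half-sum/half-difference decomposition via \eqref{hyper-big-cos} and \eqref{hyper-big-sin} is exactly the route the paper spells out for the first-kind hyperbolic transforms. One trivial slip: over the common denominator the numerators combine to $2q^{2}s$ and $2qa$ rather than $2qs$ and $2a$, but your displayed final fractions are the correct ones.
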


\begin{proof}
The proof is similar to the proof of Corollary \ref{corCos}.
\end{proof}

\noindent Next, $f$ being a function, we provide some properties related to the $(p,q)$-derivative of the $(p,q)$-Laplace transform of $f$ and the $(p,q)$-Laplace transform of the $(p,q)$-derivative of $f$. 

\begin{theorem}[$(p,q)$-derivative of transforms]\label{theo-der-trans-2} For $n\in\N$, we have 
\begin{equation}\label{trans-tft2}
\mathcal{L}_{p,q}\{t^nf(t)\}(s)=(-1)^np^{\binom{n}{2}} \partialpq{s}{n}\left[F\left(p^{-n}s\right)\right]
\end{equation}
where $F(s)=\displaystyle{\lpq\{f(t)\}(s)}$. 
\end{theorem}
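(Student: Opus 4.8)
The plan is to mirror exactly the proof of Theorem~\ref{theo-der-trans-1}, replacing the big $(p,q)$-exponential $\Epq$ by the exponential $\epq$ and using the derivative formula \eqref{nth-deriv-pqExp} in place of \eqref{nth-deriv-pqexp}. First I would start from the definition \eqref{pqlaplace2}, evaluate $F$ at $p^{-n}s$, and write
\[F\left(p^{-n}s\right)=\int_{0}^{\infty}f(t)\,\epq\!\left(-p^{1-n}st\right)\dpq t.\]
Then, assuming the $(p,q)$-derivative in $s$ may be carried under the improper $(p,q)$-integral, I would apply $\partialpq{s}{n}$ to both sides, so that the whole problem reduces to computing $\partialpq{s}{n}\big[\epq(-p^{1-n}ts)\big]$, i.e.\ the $n$-th $(p,q)$-derivative in $s$ of $\epq(\lambda s)$ with $\lambda=-p^{1-n}t$ held fixed.

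The key step is this single derivative computation. By \eqref{nth-deriv-pqExp} applied in the variable $s$,
\[\partialpq{s}{n}\big[\epq(\lambda s)\big]=\lambda^{n}p^{\binom{n}{2}}\epq(\lambda p^{n}s),\]
and substituting $\lambda=-p^{1-n}t$ gives $\lambda^{n}=(-1)^{n}p^{n-n^{2}}t^{n}$ together with $\lambda p^{n}s=-pts$. Collecting the powers of $p$ through the identity $n-n^{2}+\binom{n}{2}=-\binom{n}{2}$, I would obtain
\[\partialpq{s}{n}\big[\epq(-p^{1-n}ts)\big]=(-1)^{n}p^{-\binom{n}{2}}t^{n}\epq(-pts).\]

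Substituting this back and pulling the constant $(-1)^{n}p^{-\binom{n}{2}}$ out of the integral, the right-hand side becomes $(-1)^{n}p^{-\binom{n}{2}}\int_{0}^{\infty}t^{n}f(t)\epq(-pts)\dpq t=(-1)^{n}p^{-\binom{n}{2}}\lpq\{t^{n}f(t)\}(s)$, which rearranges immediately to \eqref{trans-tft2}. The only genuine obstacle is the interchange of $\partialpq{s}{n}$ with the improper $(p,q)$-integral \eqref{improper1}; since the $(p,q)$-derivative is merely a finite difference quotient, this amounts to termwise manipulation of the defining bilateral series and is justified wherever that series converges, as it does for the functions treated here. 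The simplification of the $p$-exponents is then routine once the identity $n-n^{2}+\binom{n}{2}=-\binom{n}{2}$ is noted.
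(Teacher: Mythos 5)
Your proposal is correct and follows essentially the same route as the paper: differentiate $F(p^{-n}s)$ under the $(p,q)$-integral and evaluate $\partialpq{s}{n}\bigl[\epq(-p^{1-n}ts)\bigr]=(-1)^{n}p^{-\binom{n}{2}}t^{n}\epq(-pts)$ via \eqref{nth-deriv-pqExp}. Your explicit bookkeeping of the $p$-exponents through $n-n^{2}+\binom{n}{2}=-\binom{n}{2}$ matches (and cleanly justifies) the constant the paper obtains.
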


\begin{proof}
The result is obvious for $n=0$. Let $n\geq 1$, we have  
\begin{eqnarray*}
\partialpq{s}{n}[F(p^{-n}s)]&=& \int_{0}^{\infty}\! \partialpq{s}{n}\left[ \epq(-p^{-n+1}st)\right]f(t)\dpq t
\end{eqnarray*}
Using equation \eqref{nth-deriv-pqExp}, it follows that 
\begin{eqnarray*}
    \partialpq{s}{n}\left[ \epq(-p^{-n+1}st)\right]&=&\prod_{j=0}^{n-1}\left(-p^{n-1-j}t\right)\epq(-pst)\\
    &=&(-1)^np^{-\binom{n}{2}}t^n\epq(-pst).
\end{eqnarray*}
The proof is then completed.
\end{proof}

\noindent Note that \eqref{trans01} can be obtained using Theorem \ref{theo-der-trans-2}. Of course, taking $f(t)=1$ in \eqref{trans-tft2} and using \eqref{nth-deriv-invfunct}, we have $F(s)=\dfrac{1}{s}$ and
\[L_{p,q}\{t^n\}(s)=(-1)^np^{\binom{n}{2}}\ \partialpq{s}{n}\left[\dfrac{p^n}{s}\right]=(-1)^np^{\binom{n+1}{2}}\dfrac{(-1)^n[n]_{p,q}!}{(pq)^{\binom{n+1}{2}}s^{n+1}}=\dfrac{[n]_{p,q}!}{q^{\binom{n+1}{2}}s^{n+1}}.\]

\begin{corollary}
The following equation applies:
\begin{eqnarray*}
\lpq\{t^n\Epq(at)\}(s)&=&\dfrac{q^{n+1}p^{\binom{n+1}{2}}[n]_{p,q}!}{(q^{n+1}s-ap^n)(q^nps-ap^n)\cdots(q^2p^{n-1}s-ap^n)(pq^ns-ap^n)}\\
&=&\dfrac{q^{n+1}p^{\binom{n+1}{2}}[n]_{p,q}!}{\prod\limits_{k=0}^{n}\left(q^{n+1-k}p^ks-ap^n\right)}.
\end{eqnarray*}
\end{corollary}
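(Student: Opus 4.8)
The plan is to derive this corollary as the second-kind mirror of the earlier result, by feeding the known transform of $\Epq$ into the $(p,q)$-derivative-of-transforms rule. Concretely, I would apply Theorem \ref{theo-der-trans-2} (formula \eqref{trans-tft2}) with $f(t)=\Epq(at)$, so that the entire computation reduces to $(p,q)$-differentiating a rational function whose derivative is already catalogued in \eqref{gen-nth-deriv-invfunct}. First I set $F(s)=\lpq\{\Epq(at)\}(s)$ and invoke \eqref{TransformExp2} to obtain $F(s)=\dfrac{q}{qs-a}$. Performing the substitution $s\mapsto p^{-n}s$ then gives $F(p^{-n}s)=\dfrac{qp^n}{qs-ap^n}$, which is a constant multiple of a function of the form $\dfrac{1}{\tilde a s+\tilde b}$ with $\tilde a=q$ and $\tilde b=-ap^n$.

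The second step is to evaluate $\partialpq{s}{n}\bigl[F(p^{-n}s)\bigr]$. Since $qp^n$ is free of $s$, I pull it out of the $(p,q)$-derivative and apply \eqref{gen-nth-deriv-invfunct} with $a=q$, $b=-ap^n$ and variable $s$, which yields
\[
\partialpq{s}{n}\bigl[F(p^{-n}s)\bigr]=qp^n\cdot\dfrac{(-q)^n[n]_{p,q}!}{\prod\limits_{k=0}^n\bigl(q\,p^{n-k}q^k s-ap^n\bigr)}.
\]
A short reindexing $j=n-k$ rewrites the denominator as $\prod_{k=0}^n\bigl(q^{n+1-k}p^k s-ap^n\bigr)$, exactly the product appearing in the statement.

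Finally I would substitute this into \eqref{trans-tft2} and gather the scalar factors. Combining $(-1)^n(-q)^n=q^n$, using $p^{\binom{n}{2}}\cdot p^n=p^{\binom{n+1}{2}}$ (because $\binom{n}{2}+n=\binom{n+1}{2}$), and $q\cdot q^n=q^{n+1}$ produces the prefactor $q^{n+1}p^{\binom{n+1}{2}}[n]_{p,q}!$, delivering the claimed identity. The only delicate point is the exponent bookkeeping: one must carry the $p$- and $q$-powers correctly through the dilation $s\mapsto p^{-n}s$ and the $n$-fold $(p,q)$-derivative, and verify that the reindexed denominator coincides with the displayed product. No new estimate or construction is required, since every ingredient—\eqref{TransformExp2}, \eqref{gen-nth-deriv-invfunct}, and \eqref{trans-tft2}—is already established; this corollary is the formal dual of the first-kind version obtained from \eqref{trans-tft1}.
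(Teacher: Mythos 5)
Your proposal is correct and follows essentially the same route as the paper, whose proof is precisely the one-line citation of \eqref{gen-nth-deriv-invfunct} and \eqref{trans-tft2}; you have simply filled in the details (using \eqref{TransformExp2} for $F(s)=\frac{q}{qs-a}$, the dilation $s\mapsto p^{-n}s$, and the exponent bookkeeping), and the computation checks out. The only discrepancy is a typo in the paper's first displayed line, where the last factor should read $(qp^{n}s-ap^{n})$ rather than $(pq^{n}s-ap^{n})$; your product form agrees with the correct second line.
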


\begin{proof}
The proof follows from \eqref{gen-nth-deriv-invfunct} and \eqref{trans-tft2}. 
\end{proof}

\begin{theorem}[Transform of the $(p,q)$-derivative]
For any nonnegative integer $n$, we have 
\begin{equation}\label{tr-pqder}
\lpq\left\{\Dpq^n f(t)\right\}=\dfrac{s^n}{q^{\binom{n+1}{2}}}\lpq\{f(t)\}\left(\frac{s}{q^n}\right)-\sum_{k=0}^{n-1}\dfrac{s^{n-1-k}}{q^{\binom{n-k}{2}}}\left(\Dpq^k f\right)(0).
\end{equation}
\end{theorem}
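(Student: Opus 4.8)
The plan is to establish \eqref{tr-pqder} by induction on $n$, in exact analogy with the proof of \eqref{trans-pqder1} for the first-kind transform. The only structural change is that the roles of $p$ and $q$ are interchanged, because the relevant exponential now satisfies $\Dpq\epq(\lambda t)=\lambda\epq(\lambda p t)$ rather than $\Dpq\Epq(\lambda t)=\lambda\Epq(\lambda q t)$. Accordingly, every factor $p^{\binom{\cdot}{2}}$ appearing in \eqref{trans-pqder1} gets replaced by $q^{\binom{\cdot}{2}}$, and the dilations that previously involved $p$ now involve $q$.

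For the base case $n=1$ I would start from the definition \eqref{pqlaplace2}, write $\lpq\{\Dpq f(t)\}(s)=\int_0^\infty \epq(-pst)\,\Dpq f(t)\,\dpq t$, and apply the $(p,q)$-integration by parts \eqref{pq-int-part} with the auxiliary function $\epq(-st)$ playing the role of the factor evaluated at $pt$, so that its $p$-dilation reproduces $\epq(-pst)$. Using $\Dpq\epq(-st)=-s\,\epq(-pst)$, the boundary term reduces to $\big[\epq(-st)f(t)\big]_0^\infty=-f(0)$, provided $\epq(-st)\to0$ as $t\to\infty$, and the leftover integral becomes $s\int_0^\infty f(qt)\,\epq(-pst)\,\dpq t$. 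I would then invoke the scaling identity of Proposition \ref{dil1} to recognise $\int_0^\infty f(qt)\,\epq(-pst)\,\dpq t=\tfrac1q\int_0^\infty f(t)\,\epq(-p\tfrac{s}{q}t)\,\dpq t=\tfrac1q\,\lpq\{f(t)\}\!\left(\tfrac{s}{q}\right)$, which yields $\lpq\{\Dpq f(t)\}(s)=\tfrac{s}{q}\lpq\{f(t)\}\!\left(\tfrac{s}{q}\right)-f(0)$, exactly \eqref{tr-pqder} for $n=1$.

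For the inductive step, assuming \eqref{tr-pqder} at level $n$, I would apply the $n=1$ identity to the function $\Dpq^n f$ to obtain $\lpq\{\Dpq^{n+1}f\}(s)=\tfrac{s}{q}\,\lpq\{\Dpq^n f\}\!\left(\tfrac{s}{q}\right)-(\Dpq^n f)(0)$, then substitute the induction hypothesis evaluated at $s/q$. The bulk of the work here is purely bookkeeping: after multiplying through by $s/q$ one collects the $q$-exponents using the Pascal-type identity $\binom{m}{2}+m=\binom{m+1}{2}$, so that $\binom{n+1}{2}+(n+1)=\binom{n+2}{2}$ rescales the leading term correctly and $\binom{n-k}{2}+(n-k)=\binom{n-k+1}{2}$ does the same inside the sum, and finally one absorbs the extra term $-(\Dpq^n f)(0)$ as the $k=n$ summand of the new sum, since $s^{0}/q^{\binom{1}{2}}=1$. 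The one genuine analytic point, as opposed to algebraic bookkeeping, is the vanishing of the boundary term in the base case; this is where the growth hypotheses on $f$ and the convergence of the improper $(p,q)$-integral \eqref{improper1} are actually needed, while everything else is a mechanical mirror of the first-kind computation.
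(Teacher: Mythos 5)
Your proposal is correct and follows essentially the same route as the paper: the base case $n=1$ via $(p,q)$-integration by parts \eqref{pq-int-part} using $\Dpq\epq(-st)=-s\,\epq(-pst)$ and the scaling identity of Proposition \ref{dil1} to produce $\tfrac{s}{q}\lpq\{f(t)\}\left(\tfrac{s}{q}\right)-f(0)$, then induction by applying this identity to $\Dpq^n f$ and collecting exponents with $\binom{m}{2}+m=\binom{m+1}{2}$. Your explicit remark on the vanishing of the boundary term is a point the paper glosses over, but the argument is otherwise identical.
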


\begin{proof}
For $n=1$, we have
\begin{eqnarray*}
\lpq\{f(t)\}(s)&=&\int_{0}^{\infty}\Dpq f(t)\epq(-pst)\dpq t\\
&=&\left[ f(t)\epq(-st)\right]+s\int_{0}^{\infty}f(qt)\epq(-pst)\dpq t\\
&=& -f(0)+\dfrac{s}{q}\int_{0}^{\infty}f(t)\epq\left(-p\frac{s}{q}t\right)\dpq t\\
&=& -f(0)+\dfrac{s}{q}\lpq\{f(t)\}\left(\frac{s}{q}\right).
\end{eqnarray*} 
So the relation is true for $n=1$. Let $n\geq 1$, assume that \eqref{tr-pqder} holds true, then using the case $n=1$, we can write
\begin{eqnarray*}
\lpq\left\{\Dpq^{n+1} f(t)\right\}&=&-(\Dpq^nf)(0)+\dfrac{s}{q}\lpq\{\Dpq^n f(t)\}\left(\frac{s}{q}\right)\\
&=&-(\Dpq^nf)(0)+\dfrac{s}{q}\left\{\dfrac{s^n}{q^{\binom{n+1}{2}+n}}\lpq\{f(t)\}\left(\frac{s}{q^{n+1}}\right)\right.\\
&&\left.-\sum_{k=0}^{n-1}\dfrac{s^{n-1-k}}{q^{\binom{n-k}{2}+n-1-k}}\left(\Dpq^k f\right)(0)\right\}\\
&=& -(\Dpq^nf)(0)+\dfrac{s^{n+1}}{q^{\binom{n+1}{2}+n+1}}\lpq\{f(t)\}\left(\frac{s}{q^{n+1}}\right)\\
&&-\sum_{k=0}^{n-1}\dfrac{s^{n-k}}{q^{\binom{n-k}{2}+n-k}}\left(\Dpq^k f\right)(0)\\
&=&\dfrac{s^{n+1}}{q^{\binom{n+2}{2}}}\lpq\{f(t)\}\left(\frac{s}{q^{n+1}}\right)
-\sum_{k=0}^{n}\dfrac{s^{n-k}}{q^{\binom{n-k+1}{2}}}\left(\Dpq^k f\right)(0).
\end{eqnarray*}
The relation holds then true for each integer $n\geq 1$. 
\end{proof}
We now have another possibility to comput $\lpq\{t^n\}(s)$ using \eqref{tr-pqder}. Of course, applying \eqref{tr-pqder} to $f(t)=t^n$, we have 
\[\lpq\{\Dpq^n t^n\}(s)=\dfrac{s^n}{q^{\binom{n+1}{2}}}\lpq\{t^n\}\left(\frac{s}{q^n}\right).\]
Taking care that $\Dpq^n t^n=[n]_{p,q}!$, it follows that 
\[\lpq\{t^n\}\left(\frac{s}{q^n}\right)= q^{\binom{n+1}{2}}\dfrac{[n]_{p,q}!}{s^n}\lpq\{1\}(s)=\dfrac{[n]_{p,q}!q^{\binom{n+1}{2}}}{s^{n+1}}.\]
Replacing $s$ by $sq^n$, it follows that 
\[\lpq\{t^n\}(s)=\dfrac{[n]_{p,q}!q^{\binom{n+1}{2}}}{s^{n+1}q^{n(n+1)}}=\dfrac{[n]_{p,q}!}{q^{\binom{n+1}{2}}s^{n+1}}.\]

\section{Application of $(p,q)$-Laplace transform to certain $(p,q)$-difference equations}

\noindent As Laplace transform and $Z$-transform are largely applied in solving differential and difference equations respectively, and the $q$-Laplace transforms are applied to solve $q$-difference equations, the $(p,q)$-Laplace transforms are expected to play similar role but now in $(p,q)$-difference equations. The idea lying behind is always the same.  In this section, we show on few examples how the Laplace transforms introduced before can be used to solve some $(p,q)$-differential equations. 

Consider the problem of finding $f(t)$, where $f(t)$ satifies $(p,q)$-Cauchy problem
\begin{equation}\label{pqdiff1}
\Dpq f(t)+cf(pt)=0,\quad f(0)=1,
\end{equation}
where $c$ stands for a complex constant.

Applying the Laplace transform of the first kind to \eqref{pqdiff1}, we obtain 
\[ -f(0)+\dfrac{s}{p}\Lpq\{f(t)\}\left(\dfrac{s}{p}\right)+c\Lpq\{f(pt)\}(s)=0.\]
Next, usint equation \eqref{theo-dil-equation}, and the initial condition $f(0)=0$, we get 
\[ -1+\dfrac{s}{p}\Lpq\{f(t)\}\left(\dfrac{s}{p}\right)+\dfrac{c}{p}\Lpq\{f(t)\}\left(\dfrac{s}{p}\right)=0.\]
Hence, 
\[\Lpq\{f(t)\}\left(\dfrac{s}{p}\right)=\dfrac{p}{s+c},\]
and so
\[\Lpq\{f(t)\}\left(s\right)=\dfrac{p}{ps+c},\]
It follows that $f(t)=\epq(-ct)$. 

Now, consider the $(p,q)$-differential equation 
\begin{equation}\label{pqdiff2}
\Dpq h(t)-\lambda h(pt)=\epq(\lambda qt), \quad h(0)=0.
\end{equation} 
Applying the $(p,q)$-Laplace transform of first kind to \eqref{pqdiff2}, it follows that 
\[-h(0)+\dfrac{s}{p}\Lpq\{h(t)\}\left(\frac{s}{p}\right)-\dfrac{\lambda}{p}\Lpq\{h(t)\}\left(\frac{s}{p}\right)=\dfrac{p}{ps-\lambda q}.\] 
Simplififications give 
\[\Lpq\{h(t)\}\left(\frac{s}{p}\right)=\dfrac{p^2}{(s-\lambda)(ps-\lambda q)},\]
and finally, replacing $s$ by $ps$, we have 
\[\Lpq\{h(t)\}\left(s\right)=\dfrac{p^2}{(ps-\lambda)(p^2s-\lambda q)}.\]
So, clear $h(t)=t\epq(\lambda t)$. 

For the last example, we consider the classical $(p,q)$-oscillator 
\begin{equation}\label{pqoscillator}
\Dpq^2 f(t)+\omega^2 f(p^2t)=0, \quad \Dpq f(0)=A.,\;\; f(0)=B.
\end{equation}
Applying the $(p,q)$-Laplace transform of first kind to \eqref{pqoscillator}, it follows that 
\[ -A-\dfrac{Bs}{p}+\dfrac{s^2}{p^3}\Lpq\{f(t)\}\left(\frac{s}{p^2}\right)+\dfrac{\omega^2}{p^2}\Lpq\{f(t)\}\left(\frac{s}{p^2}\right)=0.\]
By an easy simplification, we get 
\[  \Lpq\{f(t)\}\left(\frac{s}{p^2}\right)=\dfrac{Bs+Ap}{p}\times \dfrac{p^3}{s^2+p\omega^2}.\]
It happens that 
\[\Lpq\{f(t)\}\left(s\right)=\dfrac{Bp^2s}{(ps)^2+\left(\frac{\omega}{\sqrt{p}}\right)^2} +A\dfrac{\sqrt{p}}{\omega}\dfrac{p\dfrac{\omega}{\sqrt{p}}}{(ps)^2+\left(\frac{\omega}{\sqrt{p}}\right)^2}.  \]
Hence, the solutions of the $(p,q)$-oscillators are 
\[f(t)=B\cos_{p,q}\left(\dfrac{\omega}{\sqrt{p}}t\right)+A\dfrac{\sqrt{p}}{\omega}\sin_{p,q} \left(\dfrac{\omega}{\sqrt{p}}t\right).\]

\section{Conclusion and perspectives}

\noindent In this work, we have introduced two Laplace transforms. Many properties of these new transforms have been proved. This works is certainly not complete and should be a starting point of many other works. For example, in future works, one could define the $(p,q)$-convolution product and compute its $(p,q)$-Laplace transform. This will of course anable to solve some $(p,q)$-convolution equations. Also, another work will be to find the inversion formula for these transform, so we could be able to solve many more $(p,q)$-differential equations.

%It can be seen that $\Lpq\{f(t\}(s)=\mathcal{L}_{q,p}\{f(t)\}(s)$. 

%Body of the text here. Sections should be numbered consecutively, their titles centered.
%
%References in the text should read like e.g. Baker[1]  rather than
%Baker(1971).
%
%Numbered formulae should be recalled in the text (prepared by \LaTeX) using their labels,
%rather than numbers. Formulae which are not recalled in the paper
%should not be numbered.

\vspace{2cc}

\vspace{1cc}

 %Fill author(s) affiliation(s), address(es) and emails here:

{\small
\noindent P. Njionou Sadjang\\
University of Douala\\
Faculty of Industrial Engineering\\
E-mail: \texttt{pnjionou@yahoo.fr}

}\end{document}